\numberwithin{equation}{section}
\theoremstyle{plain}
\newtheorem{theorem}[subsubsection]{Theorem}
 \newtheorem{lemma}[subsubsection]{Lemma}
 \newtheorem{proposition}[subsubsection]{Proposition}
 \newtheorem{corollary}[subsubsection]{Corollary}
 \newtheorem{conjecture}[subsubsection]{Conjecture}
 \theoremstyle{definition}
\newcommand{\ol}{\overline}
\newcommand{\rmT}{\mathrm{T}}
\newcommand{\CC}{\mathbb{C}}
\newcommand{\LL}{\mathbb{L}}
\newcommand{\ZZ}{\mathbb{Z}}
\newcommand{\bbll}{\mathbb{L}}
\newcommand{\scZ}{\mathscr{Z}}
\newcommand{\scC}{\mathscr{C}}
\newcommand{\scX}{\mathscr{X}}
\newcommand{\scR}{\mathscr{R}}
\newcommand{\scD}{\mathscr{D}}
\newcommand{\bfa}{\mathbf{a}}
\newcommand{\calF}{\mathcal{F}}
\newcommand{\calG}{\mathcal{G}}
\newcommand{\calO}{\mathcal{O}}
\newcommand{\calR}{\mathcal{R}}
\newcommand{\calS}{\mathcal{S}}
\newcommand{\calB}{\mathcal{B}}
\newcommand{\frg}{\mathfrak{g}}
\newcommand{\frb}{\mathfrak{b}}
\newcommand{\frn}{\mathfrak{n}}
\newcommand\aff{\textup{aff}}
\newcommand{\Ind}{\textup{Ind}}
\newcommand{\ind}{\textup{ind}}
\newcommand\Lie{\textup{Lie}\ }
\newcommand\loc{\textup{loc}}
\newcommand\Mod{\textup{Mod}}
\newcommand\per{\textup{per}}
\newcommand\st{\textup{st}}
\newcommand{\Tr}{\textup{Tr}}
\newcommand\Hom{\textup{Hom}}
\newcommand{\HHH}{\mathrm{HHH}}
\newcommand{\rmHH}{\mathrm{HH}}
\newcommand\GL{\textup{GL}}
\newcommand\gl{\mathfrak{gl}}
\newcommand{\Ad}{\textup{Ad}}
\newcommand{\quash}[1]{}
\newcommand{\kn}{\mathrm{kn}}
\newcommand{\Br}{\mathfrak{Br}}
\newcommand{\Wr}{\overline{W}}
\newcommand{\Hilb}{\textup{Hilb}}
\newcommand{\MF}{\mathrm{MF}}
\newcommand{\calXr}{\overline{\mathcal{X}}}
\newcommand{\calX}{\mathcal{X}}
\newcommand{\MFs}{\mathrm{MF}}
\newcommand{\calZ}{\mathcal{Z}}
\newcommand{\calC}{\mathcal{C}}
\newcommand{\calY}{\mathcal{Y}}
\newcommand{\Fl}{\mathrm{Fl}}
\newcommand{\scz}{\mathscr{Z}}
\newcommand{\frh}{\mathfrak{h}}
\newcommand{\CE}{\mathrm{CE}}
\newcommand{\HH}{\textup{H}}
\newcommand{\odel}{\stackbin[]{\Delta}{\otimes}}
\newcommand{\forg}{\mathrm{fgt}}
\newcommand{\CH}{\mathsf{CH}}
\newcommand{\Dr}{\mathrm{Dr}}
\newcommand{\HC}{\mathrm{HC}}
\newcommand{\rfr}{\mathrm{fr}}
\def\Hilb{ \mathrm{Hilb}}
\newcommand{\ti}{\times}
\newcommand{\ot}{\otimes}
\def\rmD{ \mathrm{D}}
\author{A. Oblomkov}
\address{
A.~Oblomkov\\
Department of Mathematics and Statistics\\
University of Massachusetts at Amherst\\
Lederle Graduate Research Tower\\
710 N. Pleasant Street\\
Amherst, MA 01003 USA
}
\email{oblomkov@math.umass.edu}
\author{L. Rozansky}
\address{
L.~Rozansky\\
Department of Mathematics\\
University of North Carolina at Chapel Hill\\
CB \# 3250, Phillips Hall\\
Chapel Hill, NC 27599 USA
}
\email{rozansky@math.unc.edu}
\title{Matrix factorizations and \(\gl(m|k)\)-quantum invariants}
\begin{document}
\begin{abstract}
In our previous papers we used the Hilbert scheme of points on $\CC^2$ in order to construct a triply graded link homology and its $\gl(m)$ version. Here we extend the $\gl(m)$ construction to super-algebras $\gl(m|k)$.
\end{abstract}

\maketitle
\section{Introduction}
\label{sec:inroduction}
\def\OJ{Ocneanu-Jones}

In our previous papers \cite{OblomkovRozansky16} we developed a matrix factorization approach to the categorification of the \OJ\ trace. The construction relies on the homomorphism
\[\Phi_n: \Br_n\to \MF_n^{\st}=\MF_{\GL_n}\bigl((\gl_n\times \mathrm{T}^*\Fl\times \mathrm{T}^*\Fl\times\CC^n)^{\st},W\bigr),\]
\[W(X,z_1,z_2,v)=\Tr\bigl(X(\mu(z_1)-\mu(z_2))\bigr),\]
where $\MF_{\GL_n}(\ldots,W)$ is the category of $\GL_n$-equivariant matrix factorizations of $W$,  \(\mu:\rmT^*\Fl\to \gl_n\) is the moment map, and the stable locus $(\ldots)^{\st}$ consists of the points \((X,z_1,z_2,v)\) such that
\(\CC\langle X,\mu(z_i)\rangle\, v=\CC^n\). The scaling of \(\gl_n\) and of the cotangent fibers of $\rmT^* \Fl$ factors yields an additional \(T_{qt}=\CC^*\times \CC^*\)-equivariant structure on the
matrix factorizations.% of \(\MF_n\).

In the subsequent paper \cite{OblomkovRozansky18a} we defined the Chern character \(\CH\) and its right adjoint \(\HC\):
\begin{equation*}
\label{eq:mnchcchf}
  \begin{tikzcd}
    \MF_n^{\st}\arrow[rr,bend left,"\CH^{\st}_{\loc}"]&&\mathscr{D}_{T_{qt}}^{\per}(\Hilb_n)\arrow[ll,bend left,"\HC^{\st}_{\loc}",pos=0.435]
  \end{tikzcd},
\end{equation*}
where \(\mathscr{D}_{T_{qt}}^{\per}(\Hilb_n)\) is a derived category of two-periodic
\(T_{qt}\)-equivariant complexes on the Hilbert scheme of \(n\) points on $\CC^2$.

In the last diagram \(\CH\) is a trace functor and \(\HC\) is monoidal. In particular, in \cite{OblomkovRozansky18a} we define a categorification of 
the \OJ\ trace by
\begin{equation}
\label{eq:ojtr}
\mathcal{T}r(\beta)=\CH(\Phi(\beta))\otimes \Lambda^*\calB,
\end{equation}
where  \(\calB\) is the tautological vector bundle
on the Hilbert scheme, the fiber of the dual vector bundle being defined by
\(\calB^\vee|_I=\CC[x,y]/I\).
%with \(\calB\) being the tautological vector bundle over \(\Hilb_n\).
We show  in \cite{OblomkovRozansky20} that vector space of the derived global sections
\(\mathbb{H}(\mathcal{T}r(\beta))\) is the triply-graded homology \cite{KhovanovRozansky08a},\cite{KhovanovRozansky08b}, \cite{Khovanov07} \(\HHH(\beta)\) of the closure \(L(\beta)\).

A polynomial \(f\in \CC[x,y]\) determines a Koszul differential \[D_f:\calS\otimes \Lambda^\bullet\calB\to\calS\otimes\Lambda^{\bullet-1}\calB\] for any \(\calS\in \mathrm{D}^{\per}_{T_{qt}}(\Hilb_n)\). Here we show that the choice $f=x^m y^k$ and an insertion of the corresponding differential into the categorified \OJ\ trace~\eqref{eq:ojtr} leads to a categorification of the quantum $\gl(m|k)$ invariants of links.

\begin{theorem}\label{thm:main}
  For any \(\beta\in \Br_r\) the total homology of the complex:
  \[\mathrm{HH}_{m|k}(\beta):=\mathbb{H}(\mathcal{T}r(\beta),D_{x^m y^k})\]
  is a doubly graded vector space that categorifies \(\gl(m|k)\)-quantum invariant of \(L(\beta)\).
\end{theorem}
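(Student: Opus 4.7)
The plan is to deduce the theorem from the authors' earlier categorification of the Ocneanu--Jones trace, together with the classical fact that $\gl(m|k)$-quantum invariants arise as specializations of the HOMFLY-PT polynomial.

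First, I would invoke \cite{OblomkovRozansky20} to identify $\mathbb{H}(\mathcal{T}r(\beta))$ with the triply-graded Khovanov--Rozansky homology $\HHH(\beta)$ of the link $L(\beta)$. The three gradings come from the $q$- and $t$-weights of $T_{qt}$ together with the exterior (or $a$-)grading on $\Lambda^\bullet \calB$. I would then observe that the differential $D_{x^m y^k}$ is defined intrinsically on any object of $\mathrm{D}^{\per}_{T_{qt}}(\Hilb_n)$ through the $\CC[x,y]$-action on $\Lambda^\bullet \calB$, lowers the $a$-grading by one, and carries $T_{qt}$-weight $q^m t^{-k}$. Well-definedness as a link invariant requires compatibility with Markov moves, in particular the stabilization $\beta \mapsto \beta \sigma_n^{\pm 1}$ which changes $n$; this amounts to checking that $D_{x^m y^k}$ commutes with the canonical isomorphisms from \cite{OblomkovRozansky18a}, which follows from the intrinsic definition of the differential. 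After taking cohomology, the residual two gradings give the doubly-graded invariant claimed in the theorem.

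Having established that $\mathrm{HH}_{m|k}(\beta)$ is a link invariant, I would identify it by computing its equivariant Euler characteristic. On the level of $\HHH(\beta)$, the Koszul differential $D_{x^m y^k}$ implements the substitution $a = q^{m-k}$ in the HOMFLY-PT polynomial $P(L(\beta); q, a)$, up to sign and unknot normalization. By a classical result of Kauffman--Saleur and Turaev (see also Zhang), this specialization recovers precisely the reduced $\gl(m|k)$-quantum invariant of $L(\beta)$, which together with the homological refinement above gives the categorification statement.

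The main obstacle will be the precise matching of $D_{x^m y^k}$ with the HOMFLY-PT specialization $a = q^{m-k}$. The cleanest route is via equivariant $K$-theoretic localization on $\Hilb_r$: restricting $\Lambda^\bullet \calB$ to a torus fixed point $I_\lambda$ yields an explicit exterior algebra whose Koszul cohomology under multiplication by $x^m y^k$ admits a closed-form character, and this character should match the one assigned by Reshetikhin--Turaev to the vector $\gl(m|k)$-module on the unknot. Combined with the braid-group equivariance of $\Phi$ and the crossing resolutions of \cite{OblomkovRozansky18a}, this fixed-point calculation should pin down the invariant completely and reduce the proof to verifying a family of explicit identities indexed by partitions of $r$.
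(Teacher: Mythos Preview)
Your proposal has a genuine gap at the Markov move step, which is where essentially all of the work in the paper lies.

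You assert that compatibility of $D_{x^m y^k}$ with the stabilization move $\beta \mapsto \beta\sigma_n^{\pm 1}$ ``follows from the intrinsic definition of the differential.'' But stabilization passes from $\Hilb_n$ to $\Hilb_{n+1}$, so there is no single ambient category in which the differential is ``intrinsic''; the differentials on the two sides are genuinely different operators on different spaces, and the Markov~II isomorphism for $\HHH$ does not automatically intertwine them. In the paper this compatibility is the content of Proposition~\ref{prop:move2}, whose proof occupies most of Section~3: one first reformulates $\mathrm{HH}_{m|k}$ via a super-group specialization functor $\calR_{m|k}$ on the matrix factorization side, shows this functor is bimodular for the convolution product (which already requires a nontrivial homotopy between the two natural candidates for $d_{m|k}$), and then invokes a structural lemma (Lemma~\ref{lem:Markov}) about the closure of induced objects together with an explicit push-forward computation along the fiber $\CC\times\PP^{n-1}$ of the map between flag Hilbert schemes. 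None of this is visible from the Hilbert scheme side alone, and it is not a formality.

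Your plan for the identification step is also more elaborate than needed. Once Markov invariance is in hand, the paper simply evaluates the unknot: for $n=1$ the space is $\CC_x$, the complex is $\CC[x]\xrightarrow{x^m}\CC[x]$ (or zero differential when $k>0$), and the graded dimension visibly specializes at $\mathbf{T}=-1$ to $(1-\mathbf{Q}^{m-k})/(1-\mathbf{Q})$. There is no need for $K$-theoretic localization over all partitions of $r$; the skein/Markov reduction already pins down the invariant from the unknot value.
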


\def\bfq{ \mathbf{q}}
\def\bft{ \mathbf{t}}
\def\bfa{ \mathbf{a}}

The (triple) $(q,t,a)$-grading structure of the homology \(\HHH(\beta)\) is defined by assigning degrees to the generators $x$ and $y$ of the Hilbert scheme related ring \(\CC[x,y]\) and to the exterior powers of the tautological bundle $\calB$:
\[
\deg x = (1,0,0),\qquad \deg y = (-1,2,0),\qquad \deg \Lambda^i\calB = i (0, 1,1i).
\]

%
%The graded structure of the triply graded homology \(\HHH(\beta)\) is defined by the assignment
%of the grading in the ring \(\CC[x,y]\):
%\[\deg(x)=\mathbf{q},\quad \deg(y)=\mathbf{t}^2/\mathbf{q},\]
%and to exterior powers of the tautological vector bundle:
%\[\deg \Lambda^i\calB=(\mathbf{ta})^i.\]

The differential \(D_{x^my^k}\)  respects only the double $(Q,T)$ grading, which is defined by similar relations:
\[
\deg x =  (1,0),\qquad \deg y = (-1,2), \qquad \deg\Lambda^i\calB = i(m-k,1,1).
\]
%
%. The double grading is obtained by the substitution:
%\[\mathbf{q}=\mathbf{Q},\quad \mathbf{a}=\mathbf{Q}^{m-k},\quad \mathbf{t}=\mathbf{T}.\]

Theorem~\ref{thm:main} was already anticipated by many authors. In particular, in \cite{GorskyOblomkovRasmussenShende14} conjectural relation between the
rational DAHAs and triply-graded homology of torus knot \(T_{n,l}\) was proposed.
The Gordon-Stafford theory~\cite{GordonStafford06}  translates the conjecture to the
relation between the sheaves on \(\Hilb_n(\CC^2)\) and triply-graded homology of the torus knots. In this context the differential \(D_{x^m}\) appears naturally and a precise
conjecture for isomorphism  between \(\mathrm{HH}_{m}(T_{n,l})\)  and \(\gl(n)\) homology \cite{KhovanovRozansky08a} is given in \cite{GorskyOblomkovRasmussenShende14}.

In the case of \(m=2,k=0\) the action of the differential \(d_{2|0}\) on \(\lim_{l\to \infty }\HHH(T_{n,l})\) is explored in \cite{GorskyOblomkovRasmussen13} and
\cite{BaiGorskyKivinen19}.

The exploration of the differential similar to
\(D_{x^my^k}\) for general \(m,k\) was initiated in \cite{GorskyGukovStosic18}. Also in \cite{QueffelecRoseSartori18}  a proposal for \(\gl(-m)\) homology is presented. Finally, it is generally anticipated that \(\gl(1|1)\)-homology are related to the Hegaard-Floer homology \cite{Rasmussen15}.

{\bf Acknowledgments} We would like to thank Dmitry Arinkin, Eugene Gorsky, Ivan Losev, Roman Bezrukavnikov and Andrei Negu{\c t} for useful discussions.
The work of A.O. was supported in part by  the NSF CAREER grant DMS-1352398, NSF FRG grant DMS-1760373 and Simons Fellowship.
The work of L.R. was supported in part by  the NSF grant DMS-1108727.

\section{Matrix factorization technology}
\label{sec:matr-fact-techn}

For a proof of the main theorem we need to use some subtle details of the categorification of the braid group from our previous work
\cite{OblomkovRozansky16}. We relied on theory of matrix factorizations and the current section provides a short introduction to the matrix factorization technique of the paper.

  \subsection{Braid group functor}
\label{sec:braid-broup-functor}

In our earlier papers \cite{OblomkovRozansky16}, \cite{OblomkovRozansky19a}, \cite{OblomkovRozansky20} we defined several monoidal functors from the braid
group \(\Br_n\) to categories of matrix factorizations (see the next subsection for more details on matrix factorizations).
 In this note we concentrate on the functor
studied in \cite{OblomkovRozansky16}. We outline the construction of the braid group construction in this subsection and provide more technical details in the later subsections. The key player in the construction is the space with the potential:
\[\calX_n=\frg_n\times G_n\times \frn_n\times G_n\times \frn_n,\quad W(X,g_1,Y_1,g_2,Y_2)=\Tr(X(\Ad_{g_1}Y_1-\Ad_{g_2}Y_2)).\]

Here and everywhere in the paper we use short-hand notations for the groups and Lie algebras:
\[G_n=\GL_n,\quad \frg_n=\gl_n,\quad \frh_n\subset \frb_n\subset \frg_n,\quad \frb_n=\Lie(B_n),\quad \frh_n=\Lie(T_n) \quad \frn_n=[\frb_n,\frb_n]. \]
When the rank of the group is clear from the context the subindex is omitted.

The category of \(G_n\times B_n\times B_n \)-equivariant matrix factorizations has a natural monoidal structure:
\begin{equation}\label{eq:conv-non-red}
\MF_n=\MF_{G_n\times B^2}(\calX_n,W)\ni \calF_1,\calF_2, \quad \calF_1\star\calF_2=\pi_{13*}(\CE_{\frn_n}(\pi_{12}^*(\calF_1)\otimes \pi_{23}^*(\calF_2)))^{T_n},\end{equation}
where \(\CE_\frn(\cdot)\) is a functor of derived \(\frn\)-invariants (see section~\ref{sec:chev-eilenb-compl} and discussion in the papers \cite{OblomkovRozansky16},\cite{OblomkovRozansky19a},\cite{OblomkovRozansky20}) and
the maps in the definition of the convolution \(\star\) are the projections \(\pi_{ij}:\calX^3_n=\frg_n\times (G_n\times \frb_n)^3\to \calX_n\).

\begin{proposition}\label{prop:braids}\cite{OblomkovRozansky16},\cite{OblomkovRozansky17}
  For any \(n\) there is a monoidal functor from the affine braid group \(\Br_n^{\aff}\) to the monoidal category of matrix factorizations:
  \[\Phi_n^{\aff}:\Br_n^{\aff}\to (\MF_n,\star).\]
\end{proposition}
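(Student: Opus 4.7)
The plan is to define $\Phi_n^{\aff}$ on the standard generators of $\Br_n^{\aff}$ and then verify the defining relations as isomorphisms in the monoidal category $(\MF_n,\star)$. Recall that $\Br_n^{\aff}$ admits a Bernstein--Lusztig-type presentation with finite Artin generators $\sigma_1,\ldots,\sigma_{n-1}$ together with a commuting lattice of generators $\tau_1,\ldots,\tau_n$, subject to the finite braid relations on the $\sigma_i$, the commutativity of the $\tau_j$, and the mixed relations $\sigma_i\tau_i\sigma_i=\tau_{i+1}$ and $\sigma_i\tau_j=\tau_j\sigma_i$ for $j\ne i,i+1$.

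For each simple reflection $s_i$ I would construct $\Phi_n^{\aff}(\sigma_i)\in\MF_n$ as a Koszul matrix factorization supported on the subvariety $\calY_i\subset\calX_n$ cut out by the condition $g_2\in g_1 P_i$, where $P_i\supset B_n$ is the minimal parabolic associated to the simple root $\alpha_i$. On $\calY_i$ the potential $W$ splits off a single direction transverse to $\calY_i$ inside $\calX_n$, and $\Phi_n^{\aff}(\sigma_i)$ is the associated Koszul matrix factorization. The inverse $\Phi_n^{\aff}(\sigma_i^{-1})$ is its dual with the appropriate equivariance shift. The identity $\Phi_n^{\aff}(e)$ is the diagonal matrix factorization on $\{(g_1,Y_1)=(g_2,Y_2)\}$, and each lattice generator $\tau_j$ acts on the diagonal by twisting by the corresponding character of $T_n$.

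Next I would verify the braid relations. Far commutation $\Phi_n^{\aff}(\sigma_i)\star\Phi_n^{\aff}(\sigma_j)\cong\Phi_n^{\aff}(\sigma_j)\star\Phi_n^{\aff}(\sigma_i)$ for $|i-j|\geq 2$ is essentially tautological, since the two Koszul directions are independent and the convolution reduces to a fiber product on which the two Koszul data commute. Invertibility $\Phi_n^{\aff}(\sigma_i)\star\Phi_n^{\aff}(\sigma_i^{-1})\cong\Phi_n^{\aff}(e)$ is a local Koszul-duality calculation on a neighborhood of the diagonal, where the Chevalley--Eilenberg complex of $\frn_n$ appearing in the definition of $\star$ contracts the Koszul against its dual. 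The Bernstein--Lusztig relations follow from the observation that twisting by a lattice character intertwines the Koszul data along $\calY_i$ with a prescribed character shift dictated by the action of the affine Weyl group on $T_n$.

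The main obstacle is the three-strand braid relation
\[
\Phi_n^{\aff}(\sigma_i)\star\Phi_n^{\aff}(\sigma_{i+1})\star\Phi_n^{\aff}(\sigma_i)\cong\Phi_n^{\aff}(\sigma_{i+1})\star\Phi_n^{\aff}(\sigma_i)\star\Phi_n^{\aff}(\sigma_{i+1}).
\]
The natural strategy is to identify both triple convolutions as Koszul resolutions of a common matrix factorization supported on the subvariety where $g_2^{-1}g_1$ lies in the closed double coset $B_n s_i s_{i+1} s_i B_n = B_n s_{i+1} s_i s_{i+1} B_n$, and then to exhibit an explicit isomorphism realizing the exchange. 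The difficulty is that the two Koszul presentations naturally live on different iterated Bott--Samelson-type fiber products, so one must construct a common birational model and check that the two resulting Koszul differentials are identified by an automorphism induced by the longest element of $\langle s_i,s_{i+1}\rangle$, up to contractible matrix-factorization summands that vanish after taking derived $\frn$-invariants. Once this relation and the Bernstein--Lusztig relations are verified, the functor extends uniquely from generators to all of $\Br_n^{\aff}$.
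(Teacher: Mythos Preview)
The paper does not prove this proposition: it is stated with citations to \cite{OblomkovRozansky16} and \cite{OblomkovRozansky17} and no proof is given. What the present paper does supply (in \S\ref{sec:gener-braid-group}) is a description of the generators for the \emph{finite} braid group, and that description differs in detail from your outline. The authors work on the reduced space $\calXr_n=\frb_n\times G_n\times\frn_n$, write down an explicit rank-two Koszul matrix factorization $\bar{\calC}_+$ for $n=2$ using a by-hand factorization of $\Wr$, define $\bar{\calC}_-$ as a character twist of $\bar{\calC}_+$, and then obtain the general $\bar{\calC}_\epsilon^{(k)}$ by the parabolic induction functor $\overline{\Ind}_{k,k+1}$ rather than by describing a Koszul model directly on a Bott--Samelson-type locus inside $\calX_n$. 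Your picture of a Koszul matrix factorization supported on $\{g_2\in g_1 P_i\}$ is compatible with this after Kn\"orrer transfer, but it is not the route actually taken, and the braid relation in the cited papers is checked via the reduced/induced description rather than via a common birational model of two Bott--Samelson towers.

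There is also a genuine gap in your treatment of the affine generators. Declaring that each $\tau_j$ ``acts on the diagonal by twisting by the corresponding character of $T_n$'' would make the $\tau_j$ central up to a character, and then $\sigma_i\tau_i\sigma_i=\tau_{i+1}$ could not hold unless $\sigma_i$ itself carried the appropriate weight shift, which it does not in the construction above. In \cite{OblomkovRozansky17} the lattice part is realized through Jucys--Murphy-type elements (hence the title of that paper), which are nontrivial convolution products of the finite generators rather than mere line-bundle twists of the unit. So your Bernstein--Lusztig verification, as written, would not go through; you would need to replace the ``character twist on the diagonal'' ansatz by the JM construction and then check the mixed relations against that.
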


The finite braid group \(\Br_n\)  is naturally a subgroup of \(\Br_n^{\aff}\). In particular, we can restrict the monoidal functor  \(\Phi_n^{\aff}\) to the finite braid group to obtain monoidal functor \(\Phi_n: \Br_n\to \MF_n\)
The inclusion  of the braid groups is a section of the projection
homomorphism \(\forg:\Br_n^{\aff}\to \Br_n\). By introducing stability constraints for matrix factorizations \cite{OblomkovRozansky17} we categorify
the projection homomorphism.

The stable sub-variety of \(\calX_n\times \CC^n\) is defined by the open condition:
\[\left(\calX_n\times \CC^n\right)^{\st}=\{(X,g_1,Y_1,g_2,Y_2,v)|\CC\langle X, \Ad_{g_i}\rangle v=\CC^n,i=1,2\}.\]
The natural projection \(\pi_{v}: \left(\calX_n\times \CC^n\right)^{\st}\to \calX_n \) allows us to pull back the potential
\(W \) to the stable space \(\left(\calX_n\times \CC^n\right)^{\st}\) and use \(W\) for the pull-back \(\pi_{v}^*(W)\).
Thus we define the stable category \cite{OblomkovRozansky16} as:
\[\MF_n^{\st}=\MF_{G_n\times B^2_n}(\left(\calX_n\times \CC^n\right)^{\st},W).\]

The monoidal structure on the category \(\MF_n^{\st}\) and functor \(\Phi_n\) are defined similarly to the case of \(\MF_n\). Moreover, the monoidal structure on \(\MF_n^{\st}\) is compatible with the monoidal structure of \(\MF_n\) in the following sense. An open inclusion \[j_{\st}: \left(\calX_n\times \CC^n\right)^{\st}\to \calX_n\times \CC^n\] induces functor the pull-back functor \(j_{\st}^*\). In \cite{OblomkovRozansky17} we show that diagram of morphisms:
\[\begin{tikzcd}
    \Br_n^{\aff}\arrow[r,"\Phi_n^{\aff}"]\arrow[d,"\forg"]&\MF_n\arrow[d,"j_{\st}^*\circ\pi_v^*"]\\
    \Br_n\arrow[r,"\Phi_n"]&\MF_n^{\st}
  \end{tikzcd}
\]
is a commuting diagram of monoidal functors.

\subsection{Matrix Factorizations}
\label{sec:matr-fact}
Matrix factorizations were introduced by  Eisenbud \cite{Eisenbud80}  and later
the subject was further developed by Orlov \cite{Orlov04}. Below we present only the basic definitions and do not present any proofs.

Let us remind that for an affine variety $\calZ$ and a function $F\in \CC[\calZ]$ there exists a triangulated category $\MFs(\calZ,F)$. The objects of the category are pairs
\[ \mathcal{F}=(M_0\oplus M_1 ,D),\quad D: M_i\rightarrow M_{i+1},\quad D^2=F,\]
where \(M_i\) are free \(\CC[\calZ]\)-modules of finite rank and \(D\) is a homomorphism of  \(\CC[\calZ]\)-modules.

Given \(\mathcal{F}=(M,D)\) and \(\mathcal{G}=(N,D')\) the linear space of morphisms \(\Hom(\mathcal{F},\mathcal{G})\) consists of  homomorphisms of
\(\CC[\calZ]\)-modules
\(\phi=\phi_0\oplus \phi_1\), \(\phi_i\in\Hom(M_i,N_i) \) such that \(\phi\circ D=D'\circ \phi\).
Two morphisms \(\phi,\rho\in \Hom(\mathcal{F},\mathcal{G})\) are homotopic if there is homomorphism of \(\CC[\calZ]\)-modules \(h=h_0\oplus h_1 \),
\(h_i\in \Hom(M_i,N_{i+1})\) such that \(\phi-\rho=D'\circ h-h\circ D\).

In the paper \cite{OblomkovRozansky16} we introduced a notion of  equivariant matrix factorizations which we explain below.
First let us remind the construction of the Chevalley-Eilenberg complex.

\subsection{Chevalley-Eilenberg complex}
\label{sec:chev-eilenb-compl}

Suppose that $\frh$ is a Lie algebra. Chevalley-Eilenberg complex
 $\CE_\frh$ is the complex $(V_\bullet(\frh),d)$ with $V_p(\frh)=U(\frh)\otimes_\CC\Lambda^p \frh$ and differential $d_{ce}=d_1+d_2$ where:
 \def\dtheta{d}
 $$ d_1(u\otimes x_1\wedge\dots \wedge x_p)=\sum_{i=1}^p (-1)^{i+1} ux_i\otimes x_1\wedge\dots \wedge \hat{x}_i\wedge\dots\wedge x_p,$$
 $$ d_2(u\otimes x_1\wedge\dots \wedge x_p)=\sum_{i<j} (-1)^{i+j} u\otimes [x_i,x_j]\wedge x_1\wedge\dots \wedge \hat{x}_i\wedge\dots\wedge \hat{x}_j\wedge\dots \wedge x_p,$$

 Let us denote by $\Delta$ the standard map $\frh\to \frh\otimes \frh$ defined by $x\mapsto x\otimes 1+1\otimes x$.
 Suppose $V$ and $W$ are modules over the Lie algebra $\frh$ then we use notation
 $V\odel W$ for  the $\frh$-module which is isomorphic to $V\otimes W$ as a vector space, the $\frh$-module structure being defined by  $\Delta$. Respectively, for a given $\frh$-equivariant matrix factorization $\calF=(M,D)$ we denote by $\CE_{\frh}\odel \calF$
 the $\frh$-equivariant matrix factorization $(CE_\frh\odel\calF, D+d_{ce})$. The $\frh$-equivariant structure on $\CE_{\frh}\odel \calF$ originates from the
 left action of $U(\frh)$ that commutes with right action on $U(\frh)$ used in the construction of $\CE_\frh$.

 A slight modification of the standard fact that $\CE_\frh$ is the resolution of the trivial module implies that \(\CE_\frh\odel M\) is a free resolution of the
$\frh$-module $M$.

\subsection{Equivariant matrix factorizations}
\label{sec:equiv-matr-fact}

Let us assume that there is an action of the Lie algebra \(\frh\) on \(\calZ\) and \(F\) is a \(\frh\)-invariant function.
Then we can construct the following triangulated category \(\MFs_{\frh}(\calZ,W)\).

The objects of the category are  triples:
\[\mathcal{F}=(M,D,\partial),\quad (M,D)\in\MFs(\calZ,W) \]
where $M=M^0\oplus M^1$ and $M^i=\CC[\calZ]\otimes V^i$, $V^i \in \Mod_{\frh}$,
$\partial\in \oplus_{i>j} \Hom_{\CC[\calZ]}(\Lambda^i\frh\otimes M, \Lambda^j\frh\otimes M)$ and $D$ is an odd endomorphism
$D\in \Hom_{\CC[\calZ]}(M,M)$ such that
$$D^2=F,\quad  D_{tot}^2=F,\quad D_{tot}=D+d_{ce}+\partial,$$
where the total differential $D_{tot}$ is an endomorphism of $\CE_\frh\odel M$, that commutes with the $U(\frh)$-action.

%Given two $H$-equivariant matrix factorizations $\calF=(M,D,\partial)$ and $\tilde{\calF}=(\tilde{M},\tilde{D},\tilde{\partial})$

Note that we do not impose the equivariance condition on the differential $D$ in our definition of matrix factorizations. On the other hand, if $\calF=(M,D)\in \MFs(\calZ,F)$ is a matrix factorization with
$D$ that commutes with $\frh$-action on $M$ then $(M,D,0)\in \MFs_\frh(\calZ,F)$.

There is a natural forgetful functor $\MFs_\frh(\calZ,F)\to \MFs(\calZ,F)$ that forgets about the correction differentials:
$$\calF=(M,D,\partial)\mapsto \calF^\sharp:=(M,D).$$

Given two $\frh$-equivariant matrix factorizations $\calF=(M,D,\partial)$ and $\tilde{\calF}=(\tilde{M},\tilde{D},\tilde{\partial})$ the space of morphisms $\Hom(\calF,\tilde{\calF})$ consists of
homotopy equivalence classes of elements $\Psi\in \Hom_{\CC[\calZ]}(\CE_\frh\odel M, \CE_\frh\odel \tilde{M})$ such that $\Psi\circ D_{tot}=\tilde{D}_{tot}\circ \Psi$ and $\Psi$ commutes with
$U(\frh)$-action on $\CE_\frh\odel M$. Two maps $\Psi,\Psi'\in \Hom(\calF,\tilde{\calF})$ are homotopy equivalent if
there is \[ h\in  \Hom_{\CC[\calZ]}(\CE_\frh\odel M,\CE_\frh\odel\tilde{M})\] such that $\Psi-\Psi'=\tilde{D}_{tot}\circ h- h\circ D_{tot}$ and $h$ commutes with $U(h)$-action on  $\CE_\frh\odel M$.

 Given two $\frh$-equivariant matrix factorizations $\calF=(M,D,\partial)\in \MFs_\frh(\calZ,F)$ and $\tilde{\calF}=(\tilde{M},\tilde{D},\tilde{\partial})\in \MFs_\frh(\calZ,\tilde{F})$
 we define $\calF\otimes\tilde{\calF}\in \MFs_\frh(\calZ,F+\tilde{F})$ as the equivariant matrix factorization $(M\otimes \tilde{M},D+\tilde{D},\partial+\tilde{\partial})$.

 \subsection{Push forwards, quotient by the group action}
\label{sec:push-forwards}

The technical part of \cite{OblomkovRozansky16} is the construction of push-forwards of equivariant matrix factorizations. Here we state the main
results, the details may be found in section 3 of \cite{OblomkovRozansky16}. We need push forwards along projections and embeddings. We also use  the
functor of taking quotient by group action for our definition of the convolution algebra.

The projection case is more elementary. Suppose \(\calZ=\mathcal{X}\times\mathcal{Y}\), both \(\calZ \) and \(\mathcal{X}\) have \(\frh\)-action and
the projection \(\pi:\mathcal{Z}\rightarrow\mathcal{X}\) is \(\frh\)-equivariant. Then
for any $\frh$ invariant element $w\in\CC[\calX]^\frh$ there is a functor
\(\pi_{*}\colon \MFs_{\frh}(\calZ, \pi^*(w))\rightarrow \MFs_{\frh}(\mathcal{X},w)
\)
%if \(F=\pi_1^*(f)\)  then there is a well-defined functor \(\pi_{1*}:\MFs_\frh(\calZ,F)
%\rightarrow\MFs_\frh(\calX,f)\)
which simply forgets the action of $\CC[\calY]$.

%is just a restriction of scalars morphism from \(\CC[\calZ]\) to \(\CC[\calX]\).

We define an embedding-related push-forward in the case when the subvariety $\calZ_0\xhookrightarrow{j}\calZ$
is the common zero of an ideal $I=(f_1,\dots,f_n)$ such that the functions $f_i\in\CC[\calZ]$ form a regular sequence. We assume that the Lie algebra $\frh$ acts on $\calZ$ and $I$ is $\frh$-invariant. Then there exists an $\frh$-equivariant Koszul complex $K(I)=(\Lambda^\bullet \CC^n\otimes \CC[\calZ],d_K)$ over $\CC[\calZ]$ which has non-trivial homology only in degree zero. Then in section~3 of \cite{OblomkovRozansky16} we define the push-forward functor
\[
j_*\colon \MFs_{\frh}(\calZ_0,W|_{\calZ_0})\longrightarrow
\MFs_{\frh}(\calZ,W),
\]
for any $\frh$-invariant element $W\in\CC[\calZ]^\frh$.

%Now let us discuss the case of embeddings. We need to impose some restrictions on the embeddings. Let $S=\CC[\calZ]$ and Lie algebra $\frh$ acts on $\calZ$.
%Let us assume that $R=S/I$ where $I=(f_1,\dots,f_n)$ and functions $f_i$ form a regular sequence in $S$ and $I$ is $\frh$-invariant. Let \(\calZ_0=\mathrm{Spec}(R)\).
%That is the Koszul complex $K(I)=(\Lambda^\bullet W\otimes \CC[\calZ],d_K)$, $W=\CC^n$ is $\frh$-equvariant and has homologies only in degree $0$.
%Let $w\in R^\frh$ and $W\in S^\frh$ such that $W-w\in I $ let us choose $\calF=(M,D,\partial)\in \MFs_\frh(\calZ_0,w)$, $M=S^k$ then section 3 of \cite{OblomkovRozansky16}
%provides a construction of the  matrix factorization
%$ j_*(\calF)\in \MFs_\frh(\calZ,W).$ The matrix factorization \(j_*(\calF)\) is canonical and it is an object part of the functor of the categories:
%\[j_*:\MFs_\frh(\calZ_0,w)\rightarrow\MFs_\frh(\calZ,W)\]

Finally, let us discuss the quotient map. The complex \(\CE_\frh\) is a resolution of the trivial \(\frh\)-module by free modules. Thus the correct derived
version of taking \(\frh\)-invariant part of the matrix factorization \(\mathcal{F}=(M,D,\partial)\in\MFs_\frh(\calZ,W)\), \(W\in\CC[\calZ]^\frh\) is
\[\CE_\frh(\mathcal{F}):=(\CE_\frh(M),D+d_{ce}+\partial)\in\MFs(\calZ/H,W),\]
where \(\calZ/H:=\mathrm{Spec}(\CC[\calZ]^\frh )\) and use the general definition of \(\frh\)-module \(V\):
\[\CE_\frh(V):=\Hom_\frh(\CE_\frh,\CE_\frh\odel V).\]

\section{Markov moves}
\label{sec:markov-moves}

To prove our main result we need to show that the doubly-graded vector space is invariant with respect to the Markov moves and that
compute the character of doubly-graded vector space for the unknot. In this section we work compute deal with the Markov moves. Other  words the main result of this
section is a pair of propositions.

\begin{proposition} \label{prop:move1}For any \(\alpha,\beta\in \Br_n\) and \(m,k\)  we have
  \[\mathrm{HH}_{m|k}(\alpha\beta)=\mathrm{HH}_{m|k}(\beta\alpha).\]
  \end{proposition}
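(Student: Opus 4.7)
The plan is to reduce Proposition~\ref{prop:move1} to the trace property of the Chern character functor $\CH$, and then check that this property is compatible with the Koszul differential $D_{x^my^k}$.

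First I would recall that, by the results quoted from \cite{OblomkovRozansky18a}, the functor $\CH:\MF_n^{\st}\to \mathscr{D}^{\per}_{T_{qt}}(\Hilb_n)$ is a trace functor. In particular, for any two objects $\calF_1,\calF_2\in\MF_n^{\st}$ there is a canonical isomorphism
\[
\CH(\calF_1\star\calF_2)\;\isom\;\CH(\calF_2\star\calF_1).
\]
Applying this to $\calF_1=\Phi_n(\alpha)$ and $\calF_2=\Phi_n(\beta)$, and using that $\Phi_n$ is monoidal, gives an isomorphism
\[
\CH(\Phi_n(\alpha\beta))\;\isom\;\CH(\Phi_n(\beta\alpha))
\]
in $\mathscr{D}^{\per}_{T_{qt}}(\Hilb_n)$. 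Tensoring both sides with the fixed object $\Lambda^*\calB$ on the Hilbert scheme and using the definition \eqref{eq:ojtr} of $\mathcal{T}r$, this yields a canonical isomorphism
\[
\mathcal{T}r(\alpha\beta)\;\isom\;\mathcal{T}r(\beta\alpha)
\]
in $\mathscr{D}^{\per}_{T_{qt}}(\Hilb_n)$.

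Next I would verify that this isomorphism intertwines the Koszul differentials $D_{x^m y^k}$ on the two sides. The point is that $D_f$ for $f\in\CC[x,y]$ is defined globally on $\mathscr{D}^{\per}_{T_{qt}}(\Hilb_n)$: it is contraction by the section of $\calB^\vee$ induced by multiplication by $f$ in $\CC[x,y]/I$, and therefore is a natural transformation of the identity functor (shifted on the $\Lambda^\bullet\calB$-grading) that commutes with every morphism in the target category. In particular the trace isomorphism above, being a morphism in $\mathscr{D}^{\per}_{T_{qt}}(\Hilb_n)$, automatically commutes with $D_{x^m y^k}$. Hence the two complexes $(\mathcal{T}r(\alpha\beta),D_{x^m y^k})$ and $(\mathcal{T}r(\beta\alpha),D_{x^m y^k})$ are isomorphic in the derived category.

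Finally, applying the derived global sections functor $\mathbb{H}$ to this isomorphism of complexes preserves the $(Q,T)$-grading and commutes with the induced differential, so
\[
\mathrm{HH}_{m|k}(\alpha\beta)=\mathbb{H}(\mathcal{T}r(\alpha\beta),D_{x^my^k})\;\isom\;\mathbb{H}(\mathcal{T}r(\beta\alpha),D_{x^my^k})=\mathrm{HH}_{m|k}(\beta\alpha),
\]
as claimed. The only nontrivial step is the first one, which is not proved here but imported from \cite{OblomkovRozansky18a}; the main obstacle in a self-contained write-up would be verifying that the trace isomorphism for $\CH$ is genuinely a morphism in the derived category of the Hilbert scheme (so that it tautologically commutes with the globally defined operator $D_{x^my^k}$), rather than only a quasi-isomorphism of underlying complexes. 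Once that is in hand, the compatibility with $D_{x^my^k}$ is formal because $x^my^k$ acts via the $\CC[x,y]$-module structure on $\calB^\vee$ that exists uniformly on every object.
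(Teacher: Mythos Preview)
Your proposal is correct and reaches the same conclusion as the paper, but by a shorter route. The paper packages the differential $D_{x^my^k}$ into a ``super-group specialization'' functor $\scR_{m|k}(-)=(-)\otimes(\Lambda^*\CC^n,d_{m|k})$ on the Hilbert-scheme side together with a companion functor $\calR_{m|k}$ on $\MF_n^{\st}$, and then proves two auxiliary statements: an intertwining relation $\CH^{\st}_{\loc}\circ\calR_{m|k}=\scR_{m|k}\circ\CH^{\st}_{\loc}$, and a bimodule property $\calR_{m|k}(\calF_1\star\calF_2)=\calR_{m|k}(\calF_1)\star\calF_2=\calF_1\star\calR_{m|k}(\calF_2)$. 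Proposition~\ref{prop:move1} is then obtained by pushing $\scR_{m|k}$ through $\CH$ to the matrix-factorization side, rearranging via the bimodule identity, invoking the trace property of $\CH$ on the rearranged pair, and pushing back. Your argument bypasses this detour: since $\scR_{m|k}$ is visibly a functor on $\scD^{\per}_{T_{qt}}(\Hilb_n)$ (tensoring with a fixed complex), it carries the trace isomorphism $\CH(\calF_1\star\calF_2)\simeq\CH(\calF_2\star\calF_1)$ directly to what is needed. The paper's longer path is not wasted, however: the intertwining relations with $\CH$ and with the closure functor $\LL$ are reused in the proof of the second Markov move, so the extra infrastructure earns its keep later.
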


  The braid group \(\Br_n\)  has standard generators \(\sigma_i\), \(i=1,\dots,n-1\) and \(\Br_{n+1}=\langle \Br_n,\sigma_n\rangle\):

  \begin{proposition}\label{prop:move2}
    For any \(\beta\in \Br_n\) and any \(m,k\) we have
    \[\mathrm{HH}_{m|k}(\beta\sigma_n^{\epsilon})=\mathrm{HH}_{m|k}(\beta),\quad \epsilon=\pm 1.\]
  \end{proposition}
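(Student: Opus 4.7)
The plan is to combine the Markov II isomorphism for the triply-graded trace $\mathcal{T}r$ established in \cite{OblomkovRozansky20} with the naturality of the Koszul differential $D_{x^m y^k}$. First I would recall that $\mathcal{T}r$ itself satisfies an explicit stabilization isomorphism in $\mathrm{D}^{\per}_{T_{qt}}(\Hilb)$:
$$\mathcal{T}r(\beta\sigma_n^\epsilon) \simeq \mathcal{T}r(\beta) \otimes \mathcal{K}^\epsilon,$$
where $\mathcal{K}^\epsilon$ is a local complex built from the new generator of the tautological bundle $\calB_{n+1}/\calB_n$ together with the explicit matrix factorization $\Phi(\sigma_n^\epsilon)$. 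In the triply-graded theory, $\mathcal{K}^\epsilon$ contributes the $(1+at^{\pm 1})$-type framing factor attached to the unknot stabilization.

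Next I would exploit the naturality of $D_{x^m y^k}$: for any $\calS \in \mathrm{D}^{\per}_{T_{qt}}(\Hilb)$ the differential is defined uniformly from multiplication by $f=x^m y^k$ on $\calS \otimes \Lambda^\bullet \calB$. The isomorphism above is induced by a sheaf-theoretic correspondence on the Hilbert scheme, hence commutes with $D_{x^m y^k}$, and taking $\mathbb{H}$ reduces Proposition~\ref{prop:move2} to the local statement
$$\mathbb{H}\bigl(\mathcal{K}^\epsilon, D_{x^m y^k}\bigr) = \CC$$
as a $(Q,T)$-bigraded vector space. This is a concrete Koszul calculation near the extra point created by the stabilization: the one-dimensional wedge factor coming from $\calB_{n+1}/\calB_n$, paired with $f=x^m y^k$, assembles into a two-term Koszul complex whose total cohomology is the ground field concentrated in bidegree $(0,0)$, precisely because the bidegree $(m-k,1)$ assigned to $\Lambda^1 \calB$ in the $\gl(m|k)$-normalization cancels the $(q,t)$-shift produced by $\Phi(\sigma_n^\epsilon)$. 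The two signs $\epsilon=\pm 1$ give dual Koszul models, both giving the same answer, so no framing correction survives.

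The main obstacle is to verify the stabilization isomorphism at the matrix factorization level, before the Chevalley-Eilenberg invariants $\CE_{\frn_n}$ and the $T_n$-quotient are taken in the convolution \eqref{eq:conv-non-red}. Since the equivariant correction differentials $\partial$ in $\MF_n^{\st}$ must be matched across the passage from $\Br_n$ to $\Br_{n+1}$, one must check that the comparison map commutes with both these differentials and with the insertion of $D_{x^m y^k}$. Once this compatibility is in place, essentially by reusing the stabilization argument of \cite{OblomkovRozansky20} with $D_{x^m y^k}$ inserted uniformly throughout the convolution, the statement follows for both $\epsilon=\pm 1$ and for all $m,k$.
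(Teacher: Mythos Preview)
Your strategy diverges from the paper's and, as written, has a genuine gap.

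The tensor decomposition you posit, $\mathcal{T}r(\beta\sigma_n^\epsilon)\simeq\mathcal{T}r(\beta)\otimes\mathcal{K}^\epsilon$, is not well-posed as stated: the left side lives on $\Hilb_{n+1}$ and the right on $\Hilb_n$, so one must first supply a correspondence between the two, and this is precisely the ``main obstacle'' you defer.  More seriously, the local claim $\mathbb{H}(\mathcal{K}^\epsilon,D_{x^m y^k})=\CC$ is not justified and fails in the naive model: on a free extra point the two-term complex $[\CC[x,y]\xrightarrow{x^m y^k}\CC[x,y]]$ has cohomology $\CC[x,y]/(x^m y^k)$, not $\CC$, and for $k>0$ the differential can vanish (compare the unknot in Section~\ref{sec:unknot}).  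So the collapse cannot come from $D_{x^m y^k}$ alone.

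The paper does not work on $\Hilb_n$ at all for this step.  It passes to the reduced model and the closure functor $\LL$ on the free flag Hilbert scheme, using $\mathrm{HH}_{m|k}(\beta)=\CE_{\frn}(\scR'_{m|k}\circ\LL(\bar{\calC}_\beta))^T$.  The structural input is Lemma~\ref{lem:Markov}, which expresses $\LL$ of the stabilized braid as $\bar{\calF}'\otimes\pi^*_{\ge 2}(\LL(\bar{\calC}_\epsilon))$ with $\bar{\calF}'$ a twisted complex $(\Lambda^*V,D_V)$ over the projection $\pi$.  The new observation specific to $\gl(m|k)$ is the factorization
\[
\scR'_{m|k}(\pi^*(\calC))=\pi^*(\scR'_{m|k}(\calC))\otimes[R\to R\langle\chi_1\rangle],
\]
which peels off the extra strand's piece of $d_{m|k}$ as a $\chi_1$-weighted two-term factor.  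One then pushes $(\Lambda^*V,D_V)\otimes[R\to R\langle\chi_1\rangle]\langle\tfrac{\epsilon-1}{2}\chi_1\rangle$ forward along the fiber $\CC\times\mathbb{P}^{n-1}$ of $\hat{\pi}$; the surviving term is selected purely by $T$-weights (a Borel--Weil--Bott type argument), giving $\calO$ for $\epsilon=+$ and $\calO[n-1]$ for $\epsilon=-$.  The differential $d_{m|k}$ plays no role in this collapse beyond being compatible with the factorization above --- the cancellation mechanism is equivariant push-forward along the projective fiber, not acyclicity of a Koszul complex for $x^m y^k$.
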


It is convenient to introduce a super-group specialization functor to encode the properties of the differential  \(d_{m|k}\).

\subsection{Super-group specialization}
\label{sec:super-group-spec}
Let us introduce two functors on category \(\MF_n^{\st}\) and \(\mathscr{D}_{T_{qt}}(\Hilb_n)\):
\[\calR_{m|k}:\quad\MF_n^{\st}\to \MF_n^{\st},\quad\quad \scR_{m|k}:\quad\scD_{T_{qt}}(\Hilb_n)\to \scD_{T_{qt}}(\Hilb_n).\]
The functor is defined is almost identically for both categories. Indeed, let us fix coordinates on space
\(\calX_n\times \CC^n\) as \((X,g_1,Y_1,g_2,Y_2,v)\).  We describe \(\Hilb_n\) as \(\GL_n\) quotient
of the stable locus of \(T^*\gl_n\times \CC^n\) and the coordinates on the space \(T^*\gl_n\times \CC^n\) are
\((X,Y,v)\). The functors then defined as
\[\calR_{m|k}(\calF)=\calF\otimes (\Lambda^\star\CC^n,d_{m|k}), \quad d_{m|k}=\sum_{i=1}(X^m\Ad_{g_2}(Y_2)^kv)_i\frac{\partial}{\partial\theta_i},\]
\[\scR_{m|k}(\calC)=\calC\otimes (\Lambda^\star\CC^n,d_{m|k}), \quad d_{m|k}=\sum_{i=1}(X^mY^kv)_i\frac{\partial}{\partial\theta_i},\]
where \(\theta_i\),  \(i=1,\dots,n\)  are coordinates along \(\Lambda^\star\CC^n\).

The similarity of the functors translates into the intertwining property:
\begin{proposition}
  For any \(m,k,n\) we have
  \[\CH^{\st}_{\loc}\circ \calR_{m|k}=\scR_{m|k}\circ \CH^{\st}_{\loc}.\]
\end{proposition}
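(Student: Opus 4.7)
The plan is to reduce the proposition to two compatibility checks: first, that tensoring by the auxiliary Koszul complex $(\Lambda^\star \CC^n, d_{m|k})$ commutes with the pushforward/quotient operations underlying the Chern character; and second, that under $\CH^{\st}_{\loc}$ the $\gl_n$-valued function $\Ad_{g_2}(Y_2)$ on $\calX_n \times \CC^n$ descends to the function $Y$ on the Hilbert-scheme side. Granted these two checks, the differential called $d_{m|k}$ on the matrix-factorization side is literally transported to the differential $d_{m|k}$ on the $\Hilb_n$-side, yielding the claimed intertwining.

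First, I would observe that both $\calR_{m|k}$ and $\scR_{m|k}$ are tensor products with the same finite-rank free exterior algebra $\Lambda^\star \CC^n$; only the coefficients of the Koszul differential differ. Since the Chern character is built from $\GL_n$-equivariant projections, embeddings, and Chevalley--Eilenberg descent as reviewed in Section~\ref{sec:push-forwards}, a standard projection-formula argument gives
\[
\CH^{\st}_{\loc}\bigl(\calF \otimes (\Lambda^\star \CC^n, d)\bigr) \;=\; \CH^{\st}_{\loc}(\calF) \otimes \bigl(\Lambda^\star \CC^n, \CH^{\st}_{\loc}(d)\bigr),
\]
the image differential being obtained by transporting the coefficients of $d$ through the Chern character construction. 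Second, the potential $W = \Tr(X(\Ad_{g_1}Y_1 - \Ad_{g_2}Y_2))$ has its critical locus in the $X$-direction cut out by $\Ad_{g_1}(Y_1) = \Ad_{g_2}(Y_2)$, so on the support of any object of $\MF_n^{\st}$ these two $\gl_n$-valued expressions agree; the Chern character identifies this common value with the coordinate $Y$ on $T^*\gl_n$ while preserving $X$ and $v$. Thus the $\CC^n$-valued vector $X^m \Ad_{g_2}(Y_2)^k v$ appearing in the matrix-factorization differential maps to $X^m Y^k v$ appearing in the Hilbert-scheme differential, as required.

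The main obstacle will be a clean verification of the projection-formula step in the equivariant setting. Objects of $\MF_n^{\st}$ carry correction differentials $\partial$ from the Chevalley--Eilenberg descent of the $G_n \times B_n^2$-equivariant structure, and one must check that these do not interact nontrivially with the auxiliary factor $(\Lambda^\star \CC^n, d_{m|k})$. Since the anticommuting variables $\theta_i$ are inert under the relevant group actions, the only new contribution to the total differential is via the pulled-back coefficients; but a careful bookkeeping is required to confirm that this upgrade is compatible with the Chevalley--Eilenberg resolution and with the stability cut defining $\MF_n^{\st}$. Once this compatibility is in hand, the identification of functions on the critical locus completes the proof.
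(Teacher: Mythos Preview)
Your overall shape is right---$\CH^{\st}_{\loc}$ is a Fourier--Mukai--type transform, and tensoring by an auxiliary complex $(\Lambda^\star\CC^n,d)$ whose underlying module is free of finite rank will slide through it provided the coefficients of $d$ pull back compatibly along the correspondence. But your proposed verification of that compatibility is where the argument breaks down.

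The critical-locus reasoning you give does not do the job. Knowing that $\Ad_{g_1}Y_1$ and $\Ad_{g_2}Y_2$ agree on the critical locus of $W$ tells you at best that the two differentials $d_{m|k}$ and $d'_{m|k}$ are \emph{homotopic} after tensoring with any matrix factorization (this is the mechanism the paper uses later, for the bimodule property). It does \emph{not} tell you that the function $\Ad_{g_2}(Y_2)$ on $\calX_n^{\st}$ is carried to the coordinate $Y$ on $T^*\gl_n$ under $\CH^{\st}_{\loc}$; that is a statement about how two different spaces are related by a correspondence, not about functions on a single space coinciding on a closed subset. You assert ``the Chern character identifies this common value with the coordinate $Y$ \ldots\ while preserving $X$ and $v$'' without justification, and this is exactly the content of the proposition.

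What the paper does instead is unpack $\CH^{\st}_{\loc}$ as a composite of concrete steps---the framed Chern functor $\CH^{\rfr}$ built from an explicit kernel $\mathrm{K}_{\CH}$ on a correspondence space $\scZ_{\CH}^{\rfr}$ with projections $\pi_{\Dr}$ and $f_\Delta$, then passage to the stable locus, then Kn\"orrer periodicity (which kills the $V_G, V_{B^{(1)}}$ factors but leaves the $V_{B^{(2)}}$ factor carrying $d_{m|k}$ untouched), then restriction along $j_e$ replacing Koszul duality along $\frg$---and at each step checks by direct inspection of the formulas that the pullbacks of $d_{m|k}$ from the two sides coincide on the intermediate space. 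The crux is the coordinate identity $j^{0*}\circ\pi_{\Dr}^*(d_{m|k})=f_\Delta^*(d_{m|k})$ on $\scZ_{\CH}^{0,\rfr}$, and the observation $j_e^*(d_{m|k})=d_{m|k}$ at the final step. Your ``projection-formula'' display with the undefined symbol $\CH^{\st}_{\loc}(d)$ is really a placeholder for exactly this chain of pullback checks, and there is no shortcut around writing down $f_\Delta$ and $\pi_{\Dr}$ explicitly.
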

\begin{proof}
  Let us recall the construction of the functor \(\CH^{\st}_{\loc}\) from \cite{OblomkovRozansky18a}.
  First we recall the non-localized functor \(\CH^{\rfr}\) for framed categories:
  \[\CH^{\rfr}:\MF^{\rfr}\to \MF^{\rfr}_{\Dr},\]
  where the framed categories are defined as follows.

  Define framed versions of the spaces \(\scX\) and \(\scC\)
% and the potentials are
  \[\scX^{\rfr}:=\scX\times V^*_G\times V_{B^{(1)}}\times V_{B^{(2)}},\quad \scC^{\rfr}:=\scC\times V_G^*\times V_G,\quad \scC=\frg\times G\times \frg\]
  where the subindexes indicate the equivariant structure of the vector space \(V=\CC^n\)
and  potentials are
\[W^{\rfr}(X,g_1,Y_1,g_2,Y_2,w,v_1,v_2):=W(X,g_1,Y_1,g_2,Y_2)+\Tr(w(g_1v_1-g_2v_2)),\]
\[W^{\rfr}_{\Dr}(X,g,Z,w,v)=W_{\Dr}(X,g,Z)+\Tr(w(v-gv)),\quad W_{\Dr}(X,g,Z)=\Tr(Z(X-\Ad_gX)).\]

Respectively, we define the framed categories  as
\[ \MF^{\rfr}:=\MF_{G\times B^2}^{\mathbb{T}_{q,t}}(\scX^{\rfr},W^{\rfr}),\quad
  \MF^{\rfr}_{\Dr}:=\MF_{G}^{\mathbb{T}_{q,t}}(\calC^{\rfr}).
\]

  There two auxiliary spaces that are needed for the construction of \(\CH^{\rfr}\):
  \[\scz_{\CH}^{0,\rfr}=\frg\times G\times\frg\times G\times\frn\times V^*_G\times V_G,\quad \scZ_{\CH}=\frg\times G\times\frg\times G\times\frb\times V^*_G\times V_B\]
The action of \(G\times B\) on these spaces is
%The \(G\times B\)-equivariant structure and the invariant  potential on the spaces are
\[(k,b)\cdot (Z,g,X,h,Y,u,v)=
  (\Ad_{k}(Z),\Ad_k(g),\Ad_k(X),khb,\Ad_{b^{-1}}(Y),ku,kv).\]
%and the \(G\times B\)-invariant potential is
%\[W_{\CH}^{\rfr}(Z,g,X,h,Y)=\Tr(X(\Ad_{gh}(Y)-\Ad_{h}(Y)))+.\]
The spaces \(\scC\) and \(\scX\) are endowed with the standard \(G\times B^2\)-equivariant structure, the action of  \(B^2\) on \(\scC\) is trivial.
The following maps
\[\pi_{\Dr}(Z,g,X,h,Y,w,v)=(Z,g,X,w,v),\] \[f_\Delta(Z,g,X,h,Y,w,v)=(X,gh,Y,h,Y,w,h^{-1}g^{-1}(v),h^{-1}(v)).\]
are  fully equivariant if we restrict the \(B^2\)-equivariant structure on
\(\scX\) to the \(B\)-equivariant structure via the diagonal embedding
\(\Delta:B\rightarrow B^2\). Let us denote  by $j^0$ is the inclusion map
of \(\scZ_{\CH}^{0,\rfr}\) to \(\scZ_{\CH}^{\rfr}\).

The kernel of the Fourier-Mukai transform is the Koszul matrix factorization
\[\mathrm{K}_{\CH}:=[X-\Ad_{g^{-1}}X,\Ad_hY-Z]\in \MF(\scZ_{\CH},\pi_{\Dr}^*(W_{\Dr})-f^*_\Delta(W)).\]
and we define the Chern functor:
%Thus we define
\begin{equation}\label{eq:CH}
  \CH^{\rfr}(\calC):=\pi_{\Dr*}(\CE_{\frn}(\mathrm{K}_{\CH}\otimes (j^0_*\circ f^*_\Delta(\calC)))^{T}).
\end{equation}

Here and everywhere below we use notation \((-)^T\) for \(T\)-invariants.
From the formulas we see that
\(j^{0*}\circ\pi^*_{\Dr}(d_{m|k})=f^*_\Delta(d_{m|k})\). Thus we have intertwining relation:
  \[\CH^{\rfr}\circ \calR_{m|k}=\scR_{m|k}\circ \CH^{\rfr}.\]

  The categories the open subspaces \(\scX^{\mathrm{fs}}\subset\scX^{\rfr}\) and \(\scX^{\mathrm{fs}}_{\Dr}\subset\scX^{\rfr}_{\Dr}\) are defined by the natural stability
  conditions. The respective categories of matrix factorizations are
  \(\MF^{\mathrm{fs}}\) and \(\MF_{\Dr}^{\mathrm{fs}}\) and the Chern functor \(\CH^{\mathrm{fs}}\) between the
  categories is defined by the same formula as before. Thus the intertwining relation
  holds:
  \[\CH^{\mathrm{fs}}\circ \calR_{m|k}=\scR_{m|k}\circ \CH^{\mathrm{fs}}.\]

  The Kn\"orrer periodicity allows us to eliminate the quadratic term
  \(W^{\rfr}-W\) and the corresponding vector space factors \(V_G,V_{B^{(1)}}\) in
  the space \(\scX^{\rfr}\).
  Thus we have an equivalence between \(\MF^{\mathrm{fs}}\) and \(\MF^{\st}\) and the functor
  \(\CH^{\st}:\MF^{\st}\to \MF_{\Dr}^{\mathrm{fs}}\).

  The Kn\"orrer
  periodicity does not interacts with the factor \(V_{B^{(2)}}\) in the space \(\scX^{\rfr}\) hence the corresponding equivalence respects the differential \(d_{m|k}\).
  Thus the intertwining relation for the functor \(\CH^{\st}\) holds.

  The functor \(\CH^{\st}_{\loc}\) is obtained from the functor \(\CH^{\st}\) by
  post composing with the pull-back along the open inclusion map
  \(\loc: \frg^3\times V_G\times V_G^*\to \scC^{\rfr} \) and  the Koszul duality along the
  vector space \(\frg\).

  The composition of the Koszul duality functor with the pull-back \(\loc^*\) is
  equivalent to the pull-back along the inclusion
  \(j_e:\frg^2\times V_G\times V^*_G\to \scC^{\rfr}\) which is induced by the inclusion of
  the identity to the group \(G\). Since \(j^*_e(d_{m|k})=d_{m|k}\) the statement of the proposition follows.
  \end{proof}

The functor \(\calR_{m|k}\) has monoidal properties. More precisely, the functor is bi-modular:
\begin{proposition}
  For any \(m,k,n\) and any \(\calF_i\in \MF_n^{\st}\) we have
  \[\calR_{m|k}(\calF_1\star\calF_2)=\calF_1\star\calR_{m|k}(\calF_2)=\calR_{m|k}(\calF_1)\star\calF_2.\]
\end{proposition}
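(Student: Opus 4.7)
My plan is to analyze how the Koszul factor $(\Lambda^\star\CC^n, d_{m|k})$ interacts with each of the three operations in the convolution formula~\eqref{eq:conv-non-red}: the pullbacks $\pi_{12}^*$ and $\pi_{23}^*$, the derived invariants $\CE_{\frn_n}(-)^{T_n}$, and the pushforward $\pi_{13*}$. The crucial distinction is whether $d_{m|k}$ uses the \emph{middle} data $(g_2,Y_2)$ or the \emph{right} data $(g_3,Y_3)$ of the triple $\calX_n^3$ used to define $\star$.

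The equality $\calR_{m|k}(\calF_1\star\calF_2)=\calF_1\star\calR_{m|k}(\calF_2)$ is nearly formal. The differential $d_{m|k}$ of $\calR_{m|k}(\calF_2)$ depends only on $X$, on $v$, and on the rightmost data $(g_2^{(2)},Y_2^{(2)})$ of $\calF_2$. Under $\pi_{23}^*$ these lift to $(X,g_3,Y_3,v)$ on the triple; none of these coordinates is touched by the middle $\CE_{\frn_n}$ or by the middle $T_n$-invariants, and they survive $\pi_{13*}$ to become the $(X,g_2,Y_2,v)$ of the output. These are precisely the data used by $d_{m|k}$ on $\calF_1\star\calF_2$, so the factor $(\Lambda^\star\CC^n,d_{m|k})$ passes through the convolution untouched.

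The other equality $\calR_{m|k}(\calF_1\star\calF_2)=\calR_{m|k}(\calF_1)\star\calF_2$ is substantive. After $\pi_{12}^*$, the differential of $\calR_{m|k}(\calF_1)$ reads $d_{m|k}^{\mathrm{mid}}=\sum_i(X^m(\Ad_{g_2}Y_2)^kv)_i\partial_{\theta_i}$ and uses the \emph{middle} coordinates of the triple, while the other side requires $d_{m|k}^{\mathrm{right}}=\sum_i(X^m(\Ad_{g_3}Y_3)^kv)_i\partial_{\theta_i}$. The plan is to construct an explicit homotopy between the two candidate differentials on the pre-convolution tensor product $\pi_{12}^*\calF_1\otimes\pi_{23}^*\calF_2\otimes\Lambda^\star\CC^n$. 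Writing $A=\Ad_{g_2}Y_2-\Ad_{g_3}Y_3$ and telescoping,
\[
X^m(\Ad_{g_2}Y_2)^kv - X^m(\Ad_{g_3}Y_3)^kv = \sum_{s+t=k-1} X^m(\Ad_{g_2}Y_2)^s\, A\, (\Ad_{g_3}Y_3)^t v,
\]
the difference lies in the ideal generated by the matrix entries of $A$. These entries are precisely the derivatives $\partial W_{23}^{(2)}/\partial X$ of the potential $W_{23}^{(2)}=\Tr(XA)$ squared by the differential $D_2$ of $\pi_{23}^*\calF_2$, so any matrix-factorization presentation of $\calF_2$ supplies Koszul-type operators $\xi_{ij}$ satisfying $[D_2,\xi_{ij}]=\pm A_{ij}$. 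Contracting the telescoping sum term by term against these operators produces an odd endomorphism $h$ with $[D_{\mathrm{tot}},h]=d_{m|k}^{\mathrm{mid}}-d_{m|k}^{\mathrm{right}}$, where $D_{\mathrm{tot}}=D_1+D_2+d_{m|k}^{\mathrm{right}}+d_{ce}$ is the total differential including the \ChE correction.

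The main obstacle will be the equivariant descent: the constructed $h$ must commute with the $U(\frn_n)$-action used in $\CE_{\frn_n}$ and respect the middle $T_n$-structure, so that it descends to a genuine morphism in $\MF_n^{\st}$ after applying $\pi_{13*}\CE_{\frn_n}(\cdot)^{T_n}$. This is a bookkeeping verification that relies on the $G_n\times B_n^2$-equivariance of $X$ and of the matrix entries of $A$, together with compatibility with the push-forward formalism of Section~\ref{sec:push-forwards}; once checked, the resulting homotopy descends and yields the required isomorphism.
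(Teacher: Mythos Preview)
Your argument is correct and rests on the same key observation as the paper: differentiating the matrix factorization differential with respect to the entries of $X$ produces operators that witness $\Ad_{g_i}Y_i$ and $\Ad_{g_j}Y_j$ as homotopic endomorphisms, and these operators can be assembled (via your telescoping identity in the power $k$) into a homotopy between the two candidate Koszul differentials.

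The paper organizes the second equality differently and thereby avoids your ``main obstacle'' entirely. Rather than building the homotopy on the triple space $\calX_n^3$ and then checking compatibility with the middle $\CE_{\frn_n}(\cdot)^{T_n}$, the paper introduces a left-handed variant $d'_{m|k}=\sum_i (X^m\Ad_{g_1}(Y_1)^k v)_i\partial_{\theta_i}$ directly on $\calX_n$ and proves the single statement that for any $\calF\in\MF_n^{\st}$ the objects $\calF\otimes(\Lambda^\star\CC^n,d_{m|k})$ and $\calF\otimes(\Lambda^\star\CC^n,d'_{m|k})$ are homotopy equivalent, using $\partial D/\partial X$ as the homotopy. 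Since $d'_{m|k}$ is built from the \emph{left} data, one has $\pi_{13}^*(d'_{m|k})=\pi_{12}^*(d'_{m|k})$, so the formal argument you used for the first equality now applies symmetrically to the second: $\calR'_{m|k}(\calF_1)\star\calF_2=\calR'_{m|k}(\calF_1\star\calF_2)$, and the homotopy $\calR'_{m|k}\simeq\calR_{m|k}$ (applied once to $\calF_1$ and once to $\calF_1\star\calF_2$) closes the loop. The gain is that the homotopy lives on the two-factor space where there is no middle Borel to contend with, so your equivariant-descent verification never arises; the cost is the extra step of introducing $d'_{m|k}$. Your direct approach on $\calX_n^3$ is workable---the relevant quantities $X$, $\Ad_{g_2}Y_2$, $\Ad_{g_3}Y_3$, $v$ are indeed invariant for the middle $B_n$, so the assembled $h$ descends---but the paper's reorganization is cleaner.
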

\begin{proof}
  The convolution in \(\MF^{\st}\) is defined by the formula
  \[\calF_1\star\calF_2=\pi_{13*}(\CE_{\frn_n}(\pi_{12}^*(\calF_1)\otimes \pi_{23}^*(\calF_2)))^{T_n},\]
  where \(\pi_{ij}:\frg\times (G\times\frn)^3\times V\to \frg\times (G\times\frn)^2\times V \) are natural projections.

  Since \(\pi_{13}^*(d_{m|k})=\pi_{23}^*(d_{m|k})\), the first equality is immediate.
  To show the second equality we observe that \(\pi_{13}^*(d'_{m|k})=\pi_{12}^*(d'_{m|k})\)
  where
  \[d'_{m|k}=\sum_{i=1}(X^m\Ad_{g_1}(Y_1)^kv)_i\frac{\partial}{\partial\theta_i}.\]

  Thus to complete the proof we need to show that for any \(\calF\in \MF^{\st}\)
  matrix factorizations \(\calF\otimes\mathcal{K}_{m|k}\) and
  \(\calF\otimes\mathcal{K}'_{m|k}\) are homotopic
  here  \(\mathcal{K}_{m|k}=(\Lambda^*\CC^n,d_{m|k})\) and
  \(\mathcal{K}_{m|k}=(\Lambda^*\CC^n,d'_{m|k})\).

  Let  \(D\) be the differential  of the matrix factorization then the partial derivative \(\frac{\partial D}{\partial X}\) is homotopy between \(\Ad_{g_1}Y_1\)
  and \(\Ad_{g_2}Y_2\). Since \(D\) commutes with \(d_{m|k}\) and the matrix factorization with differentials
   \(D+d'_{m|k}\) and \(D+d_{m|k}\) are homotopic.
\end{proof}

\begin{corollary}\label{cor:trace}
  For any \(\calF_1,\calF_2\in \MF^{\st}_n\) we have
  \[\scR_{m|k}(\CH_{\loc}^{\st}(\calF_1\star\calF_2))= \scR_{m|k}(\CH_{\loc}^{\st}(\calF_2\star\calF_1))\]
\end{corollary}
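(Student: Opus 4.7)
The plan is to assemble the corollary from the three previous facts in this section together with the trace property of $\CH$ recorded in the introduction. The statement involves $\scR_{m|k}$ on the left and $\calR_{m|k}$ does not appear explicitly, so the first step is to move the functor $\scR_{m|k}$ inside the Chern character.

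First I would use the intertwining proposition, $\CH^{\st}_{\loc}\circ \calR_{m|k}=\scR_{m|k}\circ \CH^{\st}_{\loc}$, to rewrite both sides of the claim as
\[
\scR_{m|k}\bigl(\CH^{\st}_{\loc}(\calF_1\star\calF_2)\bigr)=\CH^{\st}_{\loc}\bigl(\calR_{m|k}(\calF_1\star\calF_2)\bigr),
\]
and similarly for the $\calF_2\star\calF_1$ side. Next I would apply the bi-modularity proposition in the form $\calR_{m|k}(\calF_1\star\calF_2)=\calF_1\star\calR_{m|k}(\calF_2)$, which converts the problem into showing
\[
\CH^{\st}_{\loc}\bigl(\calF_1\star\calR_{m|k}(\calF_2)\bigr)=\CH^{\st}_{\loc}\bigl(\calR_{m|k}(\calF_2)\star\calF_1\bigr).
\]

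The last equality is precisely the trace property of $\CH$ asserted in the introduction (that $\CH$ is a trace functor from the monoidal category $\MF_n^{\st}$ to $\scD_{T_{qt}}^{\per}(\Hilb_n)$), applied to the pair $\calF_1$ and $\calR_{m|k}(\calF_2)$. If one prefers, one may instead use the other form of bi-modularity, $\calR_{m|k}(\calF_2\star\calF_1)=\calR_{m|k}(\calF_2)\star\calF_1$, after invoking the trace property, and then re-apply the intertwining proposition to land back on $\scR_{m|k}\circ\CH^{\st}_{\loc}$.

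There is essentially no obstacle: the whole argument is a four-line chain of equalities, and the only thing to check is that the trace property of $\CH$ is being applied in the localized stable setting $\CH^{\st}_{\loc}$. Since the intertwining proposition was established precisely at this level (via the Kn\"orrer and localization steps in its proof), and since the bi-modularity proposition is stated for $\MF_n^{\st}$ with its convolution, the hypotheses needed for the trace property are in place, and the corollary follows.
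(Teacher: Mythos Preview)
Your proposal is correct and follows essentially the same approach as the paper: intertwine $\scR_{m|k}$ past $\CH^{\st}_{\loc}$, use bi-modularity of $\calR_{m|k}$ to isolate it on one factor, apply the trace property of $\CH^{\st}_{\loc}$, then reverse the steps. The only cosmetic difference is that the paper moves $\calR_{m|k}$ onto $\calF_1$ rather than $\calF_2$, which is immaterial.
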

\begin{proof}
  \begin{multline*}
    \scR_{m|k}(\CH_{\loc}^{\st}(\calF_1\star\calF_2))=\CH_{\loc}^{\st}(\calR_{m|k}(\calF_1\star\calF_2))=\CH_{\loc}^{\st}(\calR_{m|k}(\calF_1)\star\calF_2)=\\
    \CH_{\loc}^{\st}(\calF_2\star\calR_{m|k}(\calF_1))= \CH_{\loc}^{\st}(\calR_{m|k}(\calF_2\star\calF_1))=\scR_{m|k}(\CH_{\loc}^{\st}(\calF_2\star\calF_1)).
  \end{multline*}
\end{proof}

The functor \(\scR_{m|k}\)  provides another presentation for the doubly graded space from the introduction
\[\mathrm{HH}_{m|k}(\beta)=\mathbb{H}(\scR_{m|k}(\CH_{\loc}^{\st}\circ\Phi_n(\beta))).\]
Thus proposition~\ref{prop:move1} follows immediately from the corollary~\ref{cor:trace}. The rest of the section is dedicated to the proof of proposition~\ref{prop:move2}.

We need to recall the construct of the homology from our previous work. In particular, we need to recall the construction for the closure, induction and inclusion of braids.

\subsection{Reduced convolution} To explain induction and inclusion functors we  need to introduce
a smaller {\it `reduced'}
space \(\calXr^\ell_n:=\frb_n\times G^{\ell-1}_n\times\frn_n\) with the \(B^\ell_n\)-action:
\[(b_1,\dots,b_\ell)\cdot(X,g_1,\dots,g_{\ell-1},Y)=(\Ad_{b_1}(X),b_1g_1b_2^{-1},b_2g_2b_3^{-1},\dots,\Ad_{b_\ell}(Y)).\]
In particular the space \(\calXr_n=\calXr_n^2\) has the following \(B^2\)-invariant potential:
\[\Wr(X,g,Y)=\Tr(X\Ad_g(Y)).\]

The proposition 5.1 from \cite{OblomkovRozansky16} provides a functor:
\[\Phi:\MF_{B^2_n}(\calXr_n,\Wr)\rightarrow\MF_{B^2_n}(\calX_n,W)\]
which is an embedding of the categories. Without the \(B^2\)-equivariant structure the functor is an ordinary Kn\"orrer functor \cite{Knorrer}, the equivariant version of
the Kn\"orrer functor is defined as composition of the equivariant pull-back and push-forward (see section 5 of \cite{OblomkovRozansky16}):
\[\Phi_{\kn}:=j^x_*\circ \pi^*_y,\]
where \(\pi_y:\widetilde{\calX}\rightarrow\calXr_n\), \(\widetilde{\calX}:=\frb_n\times G_n\times\frn_n\times G_n\times \frn_n\) is the projection \(\pi_y(X,g_1,Y_1,g_2,Y_2)=
(X,g_1^{-1}g_2,Y_2)\) and \(j^x\) is the natural embedding of \(\widetilde{\calX}\) into \(\calX_n\).

Let us also introduce a convolution algebra structure on the category of matrix factorizations \(\MF_{B^2_n}(\calXr_n,\Wr)\).
There are the following
%$B^3$-equivariant
maps $\bar{\pi}_{ij}:\calXr^3_n\to\calXr_n$:
\[\bar{\pi}_{12}(X,g_{12},g_{13},Y)=(X,g_{12},\Ad_{g_{23}}(Y)_{++}),
  \quad\bar{\pi}_{13}(X,g_{12},g_{13},Y)=(X,g_{12}g_{23},Y),\]
 \[\bar{\pi}_{23}(X,g_{12},g_{13},Y)=(\Ad_{g_{12}}^{-1}(X)_+,g_{23},Y).\]
Here and everywhere below \(X_+\) and \(X_{++}\) stand for the upper and strictly-upper triangular parts of \(X\).
The map \(\bar{\pi}_{12}\times\bar{\pi}_{23}\) is  \(B^2\)-equivariant  but not \(B^3\)-equivariant. However in section 5.4 of \cite{OblomkovRozansky16} we show that
for any \(\mathcal{F},\mathcal{G}\in\MF_{B^2_n}(\calXr_n,\Wr)\) there is a natural element
\begin{equation}\label{eq:conv-red}
(\bar{\pi}_{12}\otimes_B\bar{\pi}_{23})^*(\mathcal{F}\boxtimes\mathcal{G})\in\MF_{B^3}(\calXr_3,\bar{\pi}_{13}^*(W)),
\end{equation}

such that  we can define the binary operation on \(\MF_{B^2}(\calXr,\Wr)\):
\[\mathcal{F}\bar{\star}\mathcal{G}:=\bar{\pi}_{13*}(\CE_{\frn^{(2)}}((\bar{\pi}_{12}\otimes_B\bar{\pi}_{23})^*(\mathcal{F}\boxtimes\mathcal{G}))^{T^{(2)}})\]
and \(\Phi\) intertwines the convolution structures:
\[\Phi_{\kn}(\mathcal{F})\star\Phi_{\kn}(\mathcal{G})=\Phi_{\kn}(\mathcal{F}\bar{\star}\mathcal{G}).\]

As we mentioned in the introduction, it is natural to consider the framed version of our basic spaces.
The framed version of the non-reduced space is an open subset \(\calX_{\st}^\ell\subset\calX^\ell_n\times V\), \(V=\CC^n\) defined by the stability condition:
\[ \CC \langle \Ad_{g_i}^{-1}(X),Y_i\rangle g^{-1}_i(u)=V,\quad i=1,\dots,\ell-1.    \]
Similarly, we define the framed reduced space \(\left(\calXr_n\times V\right)^{\st}=\calXr_{\st}^2\subset\calXr^2_n\times V\) with the stability condition
\begin{equation}\label{eq:stab}
  \CC\langle X,\Ad_g(Y)\rangle u=V.
  \end{equation}
%In the previous formulas \(Z_+\) and \(Z_{++}\) stands for the upper and strictly upper triangular parts of \(Z\).

Let us also define \(\calXr_{\st}^3\) to be the intersection \(\bar{\pi}_{12}^{-1}(\calXr_{\st}^2)\cap \bar{\pi}_{23}^{-1}(\calXr_{\st}^2) \) where
\(\bar{\pi}_{ij}\) are the maps \(\calXr^3_n\times V\rightarrow\calXr_n\times V\) which are just extensions of the previously discussed maps by the identity
map on \( V\). Similarly we have the natural maps \(\pi_{ij}:\calX_{\st}^3\rightarrow\calX_{\st}^2\) and
both reduced and non-reduced spaces have natural convolution algebra structure   defined by the formulas (\ref{eq:conv-non-red}) and (\ref{eq:conv-red})

We denote by $j_{\st}$ the maps \(\calX_{\st}^2\rightarrow\calX_n\), \(\calXr_{\st}^2\rightarrow\calXr_n\) that forget the framing. Lemma 12.3
of \cite{OblomkovRozansky16} says that the corresponding pull-back morphism is an homomorphism of the convolution algebras:
\[j_{\st}^*(\mathcal{F}\star\mathcal{G})=j_{\st}^*(\mathcal{F})\star j_{\st}^*(\mathcal{G}).\]
Finally, let us mention that we can restrict the Kn\"orrer functor \(\Phi_{\kn}\) on the open set \(\calXr_{\st}^2\) to obtain the functor
\[\Phi_{\kn}: \MF_{B^2_n}(\calXr_{\st}^2,\Wr)\rightarrow\MF_{B^2_n}(\calX_{\st}^2,W).\]
This functor intertwines the convolution algebra structures on the reduced and non-reduced framed spaces.

\subsection{Induction functors}
\label{sec:induction-functors}
The standard parabolic subgroup \(P_k\) has Lie algebra generated by \(\frb\) and \(E_{i+1,i}\), \(i\ne k\).
Let us define space  \(\calXr(P_k):=\frb\times P_k \times \frn\) and let us also use notation
\(\calXr(G_n)\) for \(\calXr_n\). There is a natural embedding \(\bar{i}_k:\calXr(P_k)\rightarrow\calXr_n\) and
a natural projection \(\bar{p}_k:\calXr(P_k)\rightarrow\calXr(G_k)\times\calXr(G_{n-k})\). The embedding \(\bar{i}_k\) satisfies
the conditions for existence of the push-forward and we can define the induction functor:
\[\overline{\ind}_k:=\bar{i}_{k*}\circ \bar{p}_k^*: \MF_{B_k^2}(\calXr_k,\Wr)\times\MF_{B_{n-k}^2}(\calXr_{n-k},\Wr)\rightarrow\MF_{B_n^2}(\calXr_{n},\Wr)\]

Similarly we define  the space  \(\calXr_{\st}(P_k)\subset\frb\times P_k \times \frn\times V\) as an open subset defined by the stability condition (\ref{eq:stab}).
The last space has a natural projection map \(\bar{p}_k:\calXr_{\st}(P_k)\rightarrow\calXr(G_k)\times\calXr_{\st}(G_{n-k})\) and
the embedding \(\bar{i}_k: \calXr_{\st}(P_k)\rightarrow\calXr_{\st}(G_{n})\) and we can define the induction functor:
\[\overline{\ind}_k:=\bar{i}_{k*}\circ \bar{p}_k^*: \MF_{B_k^2}(\calXr(G_k),\Wr)\times\MF_{B_{n-k}^2}(\calXr_{\st}(G_{n-k}),\Wr)\rightarrow\MF_{B_n^2}(\calXr_{\st}(G_{n}),\Wr)\]

It is shown in section 6 (proposition 6.2) of \cite{OblomkovRozansky16} that the functor \(\overline{\ind_k}\) is the homomorphism of the convolution algebras:
\[\overline{\ind}_k(\mathcal{F}_1\boxtimes\mathcal{F}_2)\bar{\star} \overline{\ind}_k(\mathcal{G}_1\boxtimes\mathcal{G}_2)=
  \overline{\ind}_k(\mathcal{F}_1\bar{\star}\mathcal{G}_2\boxtimes\mathcal{F}_2\bar{\star}\mathcal{G}_2).\]
To define the non-reduced version of the induction functors one needs to introduce the space \(\calX^\circ(G_n)=\frg\times G_n\times\frn\times\frn\) which is a
slice to the \(G_n\)-action on the space \(\calX(G_n)\). In particular, the potential \(W\) on this slice becomes:
\[W(X,g,Y_1,Y_2)=\Tr(X(Y_1-\Ad_g(Y_2))).\]
Similarly to the case of the reduced space, one can define the  space \(\calX^\circ(P_k):=\frg\times P_k\times\frn\times\frn\)   and the corresponding
maps \(i_k:\calX^\circ(P_k)\rightarrow\calX^\circ(G_n)\), \(p_k:\calX^\circ(P_k)\rightarrow\calX^\circ(G_k)\times\calX^\circ(G_{n-k})\). Thus we get a version of the induction functor for non-reduced spaces:
\[{\ind}_k:=i_{k*}\circ p_k^*: \MF_{B_k^2}(\calX(G_k),W)\times\MF_{B_{n-k}^2}(\calX(G_{n-k}),W)\rightarrow\MF_{B_n^2}(\calX(G_{n}),W)\]

It is shown in proposition 6.1 of \cite{OblomkovRozansky16} that the Kn\"orrer functor is compatible with the induction functor:
\[\ind_k\circ(\Phi_{\kn}\times\Phi_{\kn})=\Phi_{\kn}\circ\ind_k.\]

\subsection{Generators of the finite braid group action}
\label{sec:gener-braid-group}
 Let us define \(B^2_n\)-equivariant embedding \(i: \calXr(B_n)\rightarrow\calXr_n\), \(\calXr(B_n):=\frb_n\times B_n\times \frn_n\).
The pull-back of \(\Wr\) along the map \(i\) vanishes and the embedding \(i\) satisfies the conditions for existence of the push-forward
\(i_*:\MF_{B^2_n}(\calXr(B_n),0)\rightarrow \MF_{B^2}(\calXr(G_n),\Wr)\). We denote by \(\underline{\CC[\calXr(B_n)]}\in\MF_{B^2}(\calXr(B_n),0)\) the
matrix factorization with zero differential that is homologically non-trivial only in even homological degree. As it is shown in proposition 7.1 of \cite{OblomkovRozansky16} the
push-forward
\[\bar{\mathds{1}}_n:=i_{*}(\underline{\CC[\calXr(B_n)]})\]
is the unit in the convolution algebra. Similarly, \(\mathds{1}_n:=\Phi_{\kn}(\bar{\mathds{1}}_n)\) is also a unit in non-reduced case.

Let us first discuss the case of the braids on two strands. The key to construction of the braid group action in \cite{OblomkovRozansky16} is the following factorization in the case
\(n=2\):
$$\Wr(X,g,Y)=y_{12}(2g_{11}x_{11}+g_{21}x_{12})g_{21}/\det,$$
where \(\det=\det(g)\) and
$$ g=\begin{bmatrix} g_{11}&g_{12}\\ g_{21}& g_{22}\end{bmatrix},\quad X=\begin{bmatrix} x_{11}&x_{12}\\ 0& x_{22}\end{bmatrix},\quad Y=\begin{bmatrix} 0& y_{12}\\0&0\end{bmatrix}$$
Thus we can define the following strongly equivariant Koszul matrix factorization:
\[\bar{\mathcal{C}}_+:=(\CC[\calXr(G_2)]\otimes \Lambda\langle\theta\rangle,D,0,0)\in\MF_{B^2_2}(\calXr(G_2),\Wr),\]
\[  \quad D=\frac{g_{12}y_{12}}{\det}\theta+\left[g_{11}(x_{11}-x_{22})+g_{21}x_{12}\right]\frac{\partial}{\partial\theta},\]
where \(\Lambda\langle\theta\rangle\)   is the exterior algebra with one generator.

 % [g_{12}y_{12}/D,g_{11}(x_{11}-x_{22})+g_{21}x_{12}]\in\MF_{B^2}(\calXr_2,\Wr)\]
This matrix factorization corresponds to the positive elementary braid on two strands. The negative elementary braid is defined with twisting of the \(B^2_2\)-action. We define:
\[\bar{\calC}_-:=\calC_+\langle-\chi_1,\chi_2\rangle.\]

Using the induction functor we can extend the previous definition on the case of the arbitrary number of strands. For that we introduce an insertion
functor:
\[\overline{\Ind}_{k,k+1}:\MF_{B_2^2}(\calXr(G_2),\Wr)\rightarrow\MF_{B_n^2}(\calXr(G_n),\Wr)\]
\[\overline{\Ind}_{k,k+1}(\calF):=\overline{\ind}_{k+1}(\overline{\ind}_{k-1}(\bar{\mathds{1}}_{k-1}\times \calF)\times\bar{\mathds{1}}_{n-k-1}),\]
and similarly we define non-reduced insertion functor \[\Ind_{k,k+1}:\MF_{B_2^2}(\calX(G_2),W)\rightarrow\MF_{B_n^2}(\calX(G_n),W).\]
Thus we define the generators of the braid group as follows:
\[\bar{\calC}_\epsilon^{(k)}:=\overline{\Ind}_{k,k+1}(\bar{\calC}_\epsilon),\quad \calC_\epsilon^{(k)}:=\Ind_{k,k+1}( \calC_\epsilon).\]

For an element \(\beta\in\Br_n\), if we can choose a presentation
\(\beta=\sigma_{i_1}^{\epsilon_1}\dots\sigma_{i_l}^{\epsilon_l} \) then the element \(\bar{\calC}_\beta:=\bar{\calC}_{\epsilon_1}^{(i_1)}\bar{\star}\dots\bar{\star}\bar{\calC}_{\epsilon_l}^{(i_l)}\) does not depends on the presentation of the braid. Respectively, we use notation
\(\calC_\beta=\Phi_{\kn}(\bar{\calC}_\beta)\).  Respectively, the finite part \(\Phi_n\) of the homomorphism \(\Phi_n^{\aff}\) from proposition~\ref{prop:braids} is defined by \(\Phi_n(\beta)=\calC_\beta\).

\subsection{Braid closure}
\label{sec:braid-closure}

The braid closure functor \(\LL\) is very natural in context of the reduced spaces:
\[\LL: \ol{\MF_n^{\st}}\to \scD_{B}(\left(\frb\times\frn\times \CC^n\right)^{\st}), \quad \LL(\bar{\calF})=j_e^*(\bar{\calF}).\]
where \(j_e:
\left(\frb_n\times 1\times\frn_n\times\CC^n\right)^{\st}\to\left(\calXr_n\times\CC^n\right)^{\st} \) is the inclusion map.

As it is shown in \cite{OblomkovRozansky16} the \(B\) acts freely on
\((\frb\times\frn\times \CC^n)^{\st}\) and the corresponding quotient is a smooth manifold that
we call free flag Hilbert scheme:
\[\mathrm{FHilb}^{\mathrm{free}}_n=(\frb\times\frn\times \CC^n)^{\st}/B.\]

Let \((X,Y,v)\) be coordinates on the product \(\frb\times\frn\times \CC^n\) then
similarly, to the previous case we define super-group restriction functor:
\[\scR'_{m|k}(\calC)=\calC\otimes (\Lambda^\star\CC^n,d_{m|k}), \quad d_{m|k}=\sum_{i=1}(X^mY^kv)_i\frac{\partial}{\partial\theta_i}.\]

The closure functor intertwines the functors \(\scR'_{m|k}\) and \(\scR_{m|k}\) because of base change relation:
\[\scR'_{m|k}\circ \LL\circ \Phi=\LL\circ \Phi\circ \scR_{m|k}.\]

It is shown in \cite{OblomkovRozansky18a} that for any \(\calF\in \MF^{\st}_n\) we have:
\[\Hom(\calF,\calC_1)=\mathbb{H}(\CH^{\st}_{\loc}(\calF)).\]
On the other hand in \cite{OblomkovRozansky16} it was shown that
\[\Hom(\Phi(\bar{\calF}),\calC_1)=\CE_{\frn}(\LL(\bar{\calF}))^{T}.\]
Combining all these observations we obtain another construction for the homology from the introduction
\[\mathrm{HH}_{m|k}(\beta)=\CE_{\frn}(\scR'_{m|k}\circ\LL(\bar{\calC}_\beta))^T.\]

To complete our proof of the second Markov move we need to  combine the
above description of the homology together with a technical lemma from
\cite{OblomkovRozansky16} (also see \cite{OblomkovRozansky20} for further discussion of
this lemma). To state the lemma we need to set up some notations.

The  \(B^2_n\)-equivariant projection
\[\pi:\frn_n\ti \frb_n\to \frb_{n-1}\ti \frb_{n-1},\]
that projects matrices to the subspace spanned by the matrix units \(E_{ij}\), \(i,j>1\) descends to the projection
\(\pi\) between the free flag Hilbert scheme:
\(\mathrm{FHilb}_n^{\mathrm{free}}\to\mathrm{FHilb}_{n-1}^{\mathrm{free}}\). On the other hand the projection
\[\pi_{\ge k}:\frn_n\ti\frb_n\to \frn_k\ti \frb_k.\]
that projects matrices to the subspace spanned by the matrix units \(E_{ij}\), \(i,j\le k\) does not descend to
to the map between the free flag Hilbert schemes. The  failure is due to the fact that the stability condition is incompatible
with the projection. However, we can use this projection to understand the homology of the graphs that appear in the
the Markov move theorem:

\begin{lemma}\label{lem:Markov}
  Let \(\overline{\calF}\in \overline{\MF}_{n-1}\), \(\overline{\calG}\in \overline{\MF}_2\) and \(V=\CC^{n-2}\langle \chi_1\rangle \)  is a vector space
  with the  \(B_n\)-action given by the character \(\chi_1=\exp(\epsilon_1)\).
  Suppose we have \[\bbll(\overline{\calF})=(M,d)\in \rmD^{\per}_{B^2}(\frn_{n-1}\ti\frb_{n-1})\]
then
  there is a deformed
  \(\ZZ_2\)-graded complex \[\overline{\calF}'=(\pi^*M\ot \Lambda^{even}V\oplus \pi^*M\ot \Lambda^{odd}V,D_V+\pi^* d)\in \rmD^{\per}_{B}(\frn_n\ti \frb_n)\]
  \begin{equation*}\label{dia:big-1}
 D_V:    \begin{tikzcd}[column sep=small]
             \pi^* M\ar[r]\ar[rrr,bend right=15]\ar[rrrrr,bend right=20]&
      \pi^* M\ot V\ar[r]\ar[rrr,bend right=15]&
       \pi^*M\ot\Lambda^2V\ar[r]\ar[rrr,bend right=15]&
       \pi^*M\ot\Lambda^3V\ar[r]&
      \pi^*M\ot\Lambda^4 V\ar[r]&\cdots
  \end{tikzcd}
\end{equation*}
such that
\[\bbll\bigg(\overline{\ind}_1(\overline{\calF})\star \overline{\mathrm{Ind}}_{1,2}(\overline{\calG})\bigg)=\overline{\calF}'\ot \pi^*_{\ge 2}(\bbll(\overline{\calG})).\]
\end{lemma}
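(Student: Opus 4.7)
The plan is to expand $\bbll$ applied to $\overline{\ind}_1(\overline{\calF})\bar\star\overline{\mathrm{Ind}}_{1,2}(\overline{\calG})$ using the explicit formulas for both operations, then extract the asserted factorization via base change. Combining the convolution
\[
\calF_1\bar\star\calF_2 = \bar\pi_{13*}\bigl(\CE_{\frn^{(2)}}((\bar\pi_{12}\ot_B\bar\pi_{23})^*(\calF_1\boxtimes\calF_2))^{T^{(2)}}\bigr)
\]
with $\bbll=j_e^*$, I would apply base change to pull the restriction $g_{13}=1$ through $\bar\pi_{13*}$. This pins $g_{23}=g_{12}^{-1}$ on the fiber and reduces the computation to an integration over a single residual group variable $g_{12}$, still decorated by the Chevalley-Eilenberg resolution of $\frn^{(2)}$-invariants.

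Next I would exploit the supports of the two factors: $\overline{\ind}_1(\overline{\calF})$ is pushed forward from $\calXr(P_1)$ and depends only on the Levi block of $G_n$ supported on strands $2,\dots,n$, while $\overline{\mathrm{Ind}}_{1,2}(\overline{\calG})$ is pushed forward from $\calXr(P_2)$ and depends only on the $2\times 2$ block on strands $1,2$. The intersection of supports pins $g_{12}$ to $P_1\cap P_2=B_n$, and the directions of $G_n$ transverse to $B_n$ inside the combined supports are spanned precisely by the matrix units $E_{1,j}$ with $j\ge 3$. After integration these contribute an exterior factor $\Lambda^\star V$ with $V=\CC^{n-2}\langle\chi_1\rangle$, giving the asserted $\ZZ_2$-graded factor $\pi^*M\ot\Lambda^{even}V\oplus\pi^*M\ot\Lambda^{odd}V$. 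The straight arrows of $D_V$ originate from the linear terms of $\Wr$ that pair these transverse directions with entries of $\frb_n$, together with the Koszul piece built into the elementary braid factorizations. The bent higher-order arrows come from the Chevalley-Eilenberg differential in $\CE_{\frn^{(2)}}$ applied to the non-abelian brackets $[E_{1,i},E_{i,j}]=E_{1,j}$, which jump the $\Lambda^\star V$-degree by two or more. The remaining tensor factor $\pi^*_{\ge 2}(\bbll(\overline{\calG}))$ emerges because $\overline{\calG}$ sits on the $2\times 2$ block alone, so its closure commutes with $\pi_{\ge 2}^*$ on that block.

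The main obstacle is verifying that the convolution genuinely factors as a tensor product rather than a nontrivial extension: the convolution differential a priori entangles $\overline{\calF}$ and $\overline{\calG}$ through the shared $g_{12}$-coordinate and the ambient potential. My approach is to introduce an increasing filtration by total $\Lambda^\star V$-degree, identify the associated graded with the naive tensor product (where the cross-terms vanish for degree reasons), and then use standard deformation theory to lift the splitting back to the unfiltered complex, with the higher-order terms of $D_V$ playing the role of the required correction. Throughout, $B$-equivariance of the final answer is automatic from the $B^2$-equivariance of each ingredient once $j_e^*$ reduces the symmetry to the diagonal $B\subset B^2$.
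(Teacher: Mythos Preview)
The paper does not prove this lemma: it is introduced as ``a technical lemma from \cite{OblomkovRozansky16}'' (with further discussion deferred to \cite{OblomkovRozansky20}), and no argument for it appears in the present paper. So there is no in-paper proof to compare your proposal against.

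That said, your outline follows the expected shape of such an argument---base change for $j_e^*$ against $\bar\pi_{13*}$, support analysis of the two induced factors, and identification of the residual exterior algebra with Chevalley--Eilenberg corrections---and is plausible as a strategy. Two points would need tightening before it could stand as a proof. First, $P_1\cap P_2$ is not $B_n$ once $n\ge 4$: both parabolics contain the full Levi block on strands $3,\dots,n$, so their intersection is the parabolic with Levi $\GL_1\times\GL_1\times\GL_{n-2}$. What actually pins the middle group variable to $B_n$ is the additional support constraint coming from the $\bar{\mathds 1}_{n-2}$ factor hidden inside $\overline{\Ind}_{1,2}(\overline{\calG})=\overline{\ind}_2(\overline{\calG}\times\bar{\mathds 1}_{n-2})$. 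Second, the matrix units $E_{1,j}$ with $j\ge 3$ are upper-triangular and hence lie in $\frb_n$, so describing them as ``directions of $G_n$ transverse to $B_n$'' is not correct; if you mean these as the first-row entries of $\frn_n$ that survive in the $\CE_{\frn^{(2)}}$ complex after the rank-$2$ and rank-$(n{-}1)$ contributions are accounted for, that is a more coherent source for $\Lambda^\bullet V$ with the stated weight $\chi_1$, but it should be said that way. The filtration/deformation step you sketch at the end is where the real work lies, and that is precisely the part whose details live in the cited paper rather than here.
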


% In the lemma we used notation for the duality operator. For a given  matrix factorization
% \(\calF\in \MF(\calZ,U)\), \(U\in\CC[U]\)  \(\calF^*\in \MF(\calZ,-U)\) is
% constructed from \(\calF\) by inverting sing of every even differential.

\subsection{Proof of the second Markov move}
\label{sec:proof-second-markov}
To show proposition~\ref{prop:move2} we apply the previous lemma for
\(\bar{\calF}=\bar{\calC}_\beta\) and \(\bar{\calG}=\bar{\calC}_\epsilon\).
First we observe that \(\LL(\bar{\calC}_\epsilon)\) is a Koszul complex
\[
  K(x_{11}-x_{22})\langle \frac{\epsilon-1}{2}\chi_1 \rangle,\quad K(x_{11}-x_{22})=
[R\xrightarrow{x_{11}-x_{22}}R]\]
here and everywhere below \(R=\CC[(\frb\times\frn\times \CC^n)^{\st})]\).

Next, observe for any \(\calC\in \scD_B^{\per}((\frb_{n-1}\times \frn_{n-1}\times \CC^n)^{\st})\) we have
\[\scR'_{m|k}(\pi^*(\calC))=\pi^*(\scR'_{m|k}(\calC))\otimes [R\to R\langle\chi_1\rangle ]\]

The morphism \(\pi\) is \(B_n\)-equivariant and we denote by \(\hat{\pi}\) the
induced morphism on the  quotients.
The \(B_n\)-action on \((\frb\times \frn\times \CC^n)^{\st}\) is free and
it shown in \cite{OblomkovRozansky16} that the fibers of
the projection
\[\hat{\pi}: (\frb_n\times\frn_n\times \CC^n)^{\st}/B_n\to(\frb_{n-1}\times\frn_{n-1}\times \CC^{n-1})^{\st}/B_{n-1}
\]
are products \(\CC\times \mathbb{P}^{n-1}\) where the coordinate along the first
component is \(x_{11}-x_{22}\).

The functor \(\CE_{\frn_n}(\cdot)^{T_n}\) is a composition of
two functors. The first functor is the push-forward along \(\pi_{\ge 1}\times \hat{\pi}\)
and the second functor is \(\CE_{\frn_{n-1}}(\cdot)^{T_{n-1}}\). Thus to complete our
proof we need to combine  the projection formula for \((\pi_{\ge 1}\times \hat{\pi})_*\) with
the computation of the push-forward \((\pi_{\ge 1}\times \pi)_*\) applied to
\[(\Lambda^*V,D_V)\otimes [R\to R\langle\chi_1\rangle] \langle \frac{\epsilon-1}{2}\chi_1 \rangle \]

If \(\epsilon=+\) the last push-forward is \(\calO\) since the only term
of the last tensor product that contribute non-trivially to the push-forward is
\(\Lambda^0V\otimes R\).  If \(\epsilon=-\) the last push-forward is \(\calO[n-1]\) since the only term
of the last tensor product that contribute non-trivially to the push-forward is
\(\Lambda^{n-2}V[n-2]\otimes R[1]\langle\chi_1\rangle\).

\section{Evaluation of unknot and final remarks}
\label{sec:evaluation-unknot}

\subsection{Unknot}
\label{sec:unknot}

To complete proof of the main theorem we need to evaluate the homology of the unknot. For that we observe that
\[(\calXr_1\times \CC)^{\st}=\CC_x\times\CC^*\times \{0\}\times \CC^*_v.\]
Since for \(n=1\) we have \((\frb\times\frn\times V)^{\st}=\CC_x\times\CC^*_v\), \((\frb\times\frn\times V)^{\st}/\CC^*=\CC_x\)
the closure functor results in
\[\LL(\Phi_1(1))=\CC[\CC_x\times \CC^*_v],\quad \mathcal{T}r(1)=\CC[\CC_x]=\CC[x].\]

The study the differential \(D_{x^my^k}\) we split on two cases. The first case \(k=0\) then homology \(\HH_{m}(1)\) is computed by the complex
\[\mathbf{Q}^m\mathbf{T}\cdot\CC[x]\xrightarrow{x^m}\CC[x].\]
In the second case \(k\ge 0\) and the differential vanishes for \(n=1\), that is \(\HH_{m|k}(1)\) is homology of
\[\mathbf{Q}^{m-k}\mathbf{T}\cdot\CC[x]\xrightarrow{0}\CC[x].\]
To sum it up we have
\[\dim_{\mathbf{Q},\mathbf{T}}\rmHH_{m}(1)=(\mathbf{Q}^m-1)/(\mathbf{Q}-1), \quad
\dim_{\mathbf{Q},\mathbf{T}}\rmHH_{m|k}(1)=(\mathbf{T}\mathbf{Q}^{m-k}+1)/(1-\mathbf{Q}).\]

Thus indeed, \(\rmHH_{m|k}\) categorifies \(\gl(m|k)\) invariant since
\[\dim_{\mathbf{Q},\mathbf{T}=-1}\rmHH_{m|k}(1)=(1-\mathbf{Q}^{m-k})/(1-\mathbf{Q}).\]

Let us also remark that in the case \(m=k\) the last expression vanishes and some renormalization is needed to obtain a homology theory that specializes to the Alexander polynomial. We will that discussion for the future work. We also expected the technology of dualizable homology from \cite{OblomkovRozansky19a} provide a path to
proving a \(qt\)-symmetry conjecture.

\subsection{Conjectures}
\label{sec:conjectures}

We have shown that that the homology in the note decategorify to the \(\gl(n|k)\)
link invariant. In the case \(k=0\) a categorification of the knot invariant was constructed by Khovanov \cite{Khovanov00} and Khovanov-Rozansky homology
\(\mathrm{H}_{\gl(n)}^{\mathrm{KhR}}\), \cite{KhovanovRozansky08a}.

It is also shown in \cite{OblomkovRozansky20} that  \(\mathcal{T}r\) provides a categorification of Jones-Ocneanu trace that matches the categorification by Khovanov
\cite{Khovanov07}. Thus we expect that a combination of the results from
\cite{OblomkovRozansky20} and known results on the relations between the Soergel category and the categories related to Khovanov-Rozansky \(\gl(k)\)-invariants would yield a proof of the following

\begin{conjecture} For \(k\ge 2\) we have for any \(\beta\in \Br_n\)  \[\mathrm{H}_{0|k}(\beta)=\mathrm{H}_{\gl(k)}^{\mathrm{KhR}}(L(\beta)).\]
\end{conjecture}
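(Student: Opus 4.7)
The plan is to combine the Soergel--bimodule realisation of $\mathcal{T}r$ established in \cite{OblomkovRozansky20} with the known procedure for extracting $\gl(k)$--Khovanov--Rozansky homology \cite{KhovanovRozansky08a} from triply graded homology by a Koszul differential. By \cite{OblomkovRozansky20} the functor $\beta \mapsto \mathbb{H}(\mathcal{T}r(\beta))$ is canonically identified with the composition of the Rouquier braid-to-Soergel-bimodule functor $\beta \mapsto R(\beta) \in \ho(\sbim_n)$ and Hochschild homology, so that $\mathbb{H}(\mathcal{T}r(\beta)) \cong \HHH(L(\beta))$ as in \cite{Khovanov07}. The first step is to transport $D_{y^k}$ across this identification to obtain an endomorphism of $\HHH(L(\beta))$ of double degree $(-k,2)$ and to recognise it as the standard Koszul differential of degree $k$ whose total homology recovers $\mathrm{H}_{\gl(k)}^{\mathrm{KhR}}(L(\beta))$.

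The heart of the argument is the identification of differentials. On the Hilbert scheme side $D_{y^k}$ is the Koszul differential associated to the section $y^k$ of $\calB^\vee$, and at the fibre $\calB^\vee|_I = \CC[x]/I$ it acts by multiplication by $y^k$. Under the equivalence of \cite{OblomkovRozansky20} the fibres of $\calB^\vee$ correspond to the bottom Hochschild degree of $R(\beta)$, and the $y$--coordinate of $\Hilb_n(\CC^2)$ corresponds to the action of one of the two polynomial rings entering the definition of the Hochschild homology of Soergel bimodules. The second step is therefore to match $D_{y^k}$ with the polynomial--multiplication Koszul differential by $y^k$ on the Soergel side, and to invoke the known comparison between the total homology of a Rouquier complex equipped with such a differential and the $\gl(k)$--Khovanov--Rozansky homology of the braid closure.

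The main obstacle is the second step: the differential $D_{y^k}$ is defined intrinsically on $\Hilb_n(\CC^2)$ via the tautological bundle, whereas the differentials appearing in the Soergel--bimodule picture are defined combinatorially on each individual bimodule. Proving that the equivalence of \cite{OblomkovRozansky20} intertwines the $\CC[y]$--module structure on $\mathcal{T}r(\beta)$ coming from $\calB$ with the corresponding polynomial action on $\HHH(R(\beta))$, functorially in $\beta$, is the essential point. Once this is established one must still rule out homotopy corrections in bidegree $(-k,2)$; for $k \ge 2$ this should follow from rigidity of maps between Rouquier complexes in low bidegree, together with the normalisation check on the unknot already verified in Section~\ref{sec:unknot}.
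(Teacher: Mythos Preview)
This statement is a conjecture in the paper, not a theorem; the paper sketches a strategy and carries out one key computation. Your proposal follows a genuinely different route.

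The paper's strategy is categorical. It recalls that the Khovanov--Rozansky $\gl(k)$-category is the quotient of singular Soergel bimodules $\mathrm{SBim}_n^{\mathrm{s}}$ by the subcategory generated by $B^{(r_1,r_2)}$ with $r_2-r_1\ge k$, extends the functor $\mathbb{B}\colon \MF_n^{\flat}\to \mathrm{SBim}_n$ of \cite{OblomkovRozansky20} to singular bimodules, and identifies explicit matrix factorizations $\calC_\bullet^{(r_1,r_2)}$ mapping to $B^{(r_1,r_2)}$. The step actually verified is the vanishing $\mathbb{B}\circ\mathcal{R}_{0|k}(\calC_\bullet^{(r_1,r_2)})=0$ for $r_2-r_1\ge k$: after restricting to $X=0$ and passing to the slice $g_1=g_2=1$, $Y_1=Y_2=\mathrm{diag}(\vec{y})+J$, $v=e_n$, the complex $(\Lambda^\ast V_B,d_{0|k})$ becomes the Koszul complex on the components of $Y_1^k v$, and one of these components, namely $((\mathrm{diag}(\vec{y})+J)^k)_{n-k,n}$, equals $1$ identically, so the complex is acyclic. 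Thus $\mathcal{R}_{0|k}$ annihilates the generators of the kernel of the quotient functor, and the composite should descend to the $\gl(k)$-category.

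Your approach instead works at the level of homology: transport $D_{y^k}$ across the isomorphism $\mathbb{H}(\mathcal{T}r(\beta))\cong \HHH(L(\beta))$ and match it with a Rasmussen-type differential whose total homology gives $\mathrm{H}_{\gl(k)}^{\mathrm{KhR}}$. The paper's route avoids your acknowledged obstacle entirely, since it applies $\mathcal{R}_{0|k}$ on the matrix-factorization side \emph{before} passing to Soergel bimodules via $\mathbb{B}$, replacing the differential-tracking problem by a single acyclicity check on explicit generators. Your route, if the matching can be made precise, would instead give a direct spectral-sequence comparison in the style of \cite{Rasmussen15}; but the proposal does not indicate how to carry out the identification of the $y$-action on $\HHH$ beyond asserting its existence, and this is exactly the point the paper's argument is designed to circumvent.
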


In more details, the category related to the Khovanov-Rozansky \(\gl(k)\)-invariants is a quotient
of the category of singular Soergel bimodules \(\mathrm{SBim}_n^{\mathrm{s}}\) by a subcategory generated by
objects
\[B^{(r_1,r_2)}=R_n\otimes_{R_m^{S([r_1,r_2])}}R_n,\quad r_1-r_2\ge k,\]
where \(R_n=\CC[x_1,\dots,x_n]\),
\(S([r_1,r_2])\subset S_n\) is a the Young subgroup that fixes \(1,\dots,r_1-1\)
and \(r_2+1,\dots,n\). On the other hand in \cite{OblomkovRozansky20} we construct
fully-faithful monoidal functor from some natural subcategory \(\MF^\flat_n\subset \MF_n^{\st}\)
category of matrix factorizations to Soergel bimodules
\[\mathbb{B}: \MF_n^{\flat}\to \mathrm{SBim}_n.\]

This later category can be enlarged to category \(\MF_n^{\flat}\) to slightly
larger category \(\MF_n^{\flat,\mathrm{s}}\subset \MF_n^{\st}\) so that we could
extend the functor \(\mathbb{B}\) to the functor from \(\MF_n^{\flat,\mathrm{s}}\)
to \(\mathrm{SBim}_n^{\mathrm{s}}\). Let us describe the objects that corresponding to
the bimodules \(B^{(k_1,k_2)}\).

First, let us recall that used a category that is equivalent to the category \(\MF_n^{\st}\) in \cite{OblomkovRozansky20}:
\[\MF_n^{\st}=\MF_{G\times B^2}((\frg\times G\times \frb\times G\times \frb\times V_B)^{\st},W),\]
where the potential \(W\) and the stability condition is the same as before, see section~\ref{sec:braid-broup-functor}.

The matrix factorizations \(\calC_\bullet^{(1,r)}\in \MF_r^{\st}\)  are defined as Koszul
matrix factorizations:
\[(R\otimes \Lambda^*(\theta_{ij}),D,0,0), \quad R=\CC[(\frg\times G\times \frb\times G\times \frb\times V_B)^{\st}],\]
\[D=\sum_{ij} X_{ij}\frac{\partial}{\partial X_{ij}}+Z_{ij}\theta, \quad W=
  \sum_{ij}X_{ij}Z_{ij},\]
where we used coordinates \((X,g_1,Y_1,g_2,Y_2,v)\) on \((\frg\times G\times \frb\times G\times \frb\times V_B)^{\st}\).

The objects \(\calC_\bullet^{(r_1,r_2)}\), \(r_2-r_1=r-1\) are obtained from \(\calC_\bullet^{(1,r)}\) by means of the induction functors discussed in the section
\ref{sec:induction-functors}.

The key observation toward a proof of the conjecture is the following
statement for \(\calC_{\bullet}^{(r_1,r_2)}\in \MF^{\flat,\mathrm{s}}_n\):
\[\mathbb{B}\circ \mathcal{R}_{0|k}(\calC_\bullet^{(r_1,r_2)})=0, \quad r_2-r_1\ge k. \]

It is a straight forward to check that the last equality is compatible with the
induction functors. Thus it is enough to check the statement for \(r_1=1,r_2=n\). So let us recall a construction of the functor \(\mathbb{B}\) from \cite{OblomkovRozansky20}:
\[\mathbb{B}(\calC)=\pi_*(j_{x=0}^*(\calC)^{G\times B^2}),\]
where \(j_{x=0}\) is the embedding of \(((G\times \frb)^2\times V_B)^{\st}\)
inside \( (\frg\times(G\times \frb)^2\times V_B)^{\st} \) and \(\pi\) is the
projection from \(((G\times \frb)^2\times V_B)^{\st}\) to \(\frh^2\).

Thus we the pull back along \(j_{x=0}\) results into the Koszul complex:
\[j^*_{x=0}(\calC_\bullet^{(1,n)})=K[Z_{ij}].\]
Since \(Z_{ij}=(\Ad_{g_1}Y_1-\Ad_{g_2}Y_2)\), to complete our computations
of \(\mathbb{B}(\calC_\bullet^{(1,n)})\) we need to \(G\times B^2\) invariant
part of
\[i_{z=0}^*((\Lambda^*V_B,d_{0|k})), \quad i_{z=0}: \{Z_{ij}=0\}\to ((G\times \frb)^2\times V_B)^{\st}. \]

The \(G\times B^2\)-action on the subspace defined by the equations \(Z_{ij}=0\) is free and a slice to the action has the following parametrization:
\[g_1=g_2=1,\quad Y_1=Y_2=\mathrm{diag}(\vec{y})+J,\quad v=e_n,\]
where \((e_n)_i=\delta_{n-i}\) and \(J_{ij}=\delta_{j-i-1}\).

On the slice the differential \(d_{0|k}\) defines the complete intersection variety defined by the equation \(Y_1^kv=0\). Finally, we can observe that
\((\mathrm{diag}(\vec{y})+J)^k_{n-k,n}=1\) hence the equation \(Y_1^kv=0\) cuts out
empty set on the slice.

Let us also remark that in \cite{OblomkovRozansky19a} we prove \(q\to -q/t\) symmetry of HOMFLYPT homology. We expect that the method of the mentioned paper yields a proof of the following

\begin{conjecture}
  Let \(\pi_0(L(\beta))=1\) then
  \[\rmHH_{m|m}(\beta)=\rmHH_{m|m}(\beta)|_{\mathbf{Q}\to \mathbf{T}/\mathbf{Q}}.\]
\end{conjecture}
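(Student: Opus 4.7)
The plan is to transport the dualizable-homology machinery of \cite{OblomkovRozansky19a}, which establishes the $q \to -q/t$ symmetry of triply-graded HOMFLYPT homology, into the $\gl(m|m)$ specialization. Concretely, using the closure description
\[
\rmHH_{m|m}(\beta) = \CE_{\frn}\bigl(\scR'_{m|m}\circ\LL(\bar{\calC}_\beta)\bigr)^T
\]
established in Section~\ref{sec:braid-closure}, the task reduces to constructing a self-duality of the complex on the right whose grading response is precisely $\mathbf{Q}\mapsto \mathbf{T}/\mathbf{Q}$.

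First I would recall and adapt the dualizing involution $\mathbb{D}$ of \cite{OblomkovRozansky19a} on an appropriate subcategory of $\rmD^{\per}_B((\frb_n\times\frn_n\times \CC^n)^{\st})$. This involution combines a Serre-like dualization with a geometric operation that exchanges the roles of the $X$- and $Y$-coordinates on the free flag Hilbert scheme, and it satisfies $\mathbb{D}(\LL(\bar{\calC}_\beta)) \simeq \LL(\bar{\calC}_\beta)$ for braids whose closure is a knot, which is exactly where the hypothesis $\pi_0(L(\beta))=1$ enters (for a disconnected closure one would pick up extra tensor factors of the type that cause the $\gl(m|m)$ link invariant to vanish, as observed in the unknot calculation). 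After applying $\CE_{\frn}(-)^T$, the functor $\mathbb{D}$ acts on the resulting bigraded vector space by $\mathbf{Q}\mapsto\mathbf{T}/\mathbf{Q}$, recovering the HOMFLYPT symmetry in this setting.

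Next I would check that $\mathbb{D}$ commutes with the super-group restriction $\scR'_{m|m}$. The defining polynomial $X^m Y^m v$ of the Koszul differential $d_{m|m}$ is invariant, up to matrix transpose and an overall sign on $\Lambda^\star\CC^n$, under the $X\leftrightarrow Y$ involution precisely because the two exponents agree; inside the $\frn$-equivariant derived category with $T$-invariants the transposed and untransposed versions are canonically homotopic, yielding $\mathbb{D}\circ\scR'_{m|m}\cong\scR'_{m|m}\circ\mathbb{D}$. Combining this intertwining with the previous step produces the claimed symmetry of $\rmHH_{m|m}(\beta)$.

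The main obstacle is this compatibility with $\scR'_{m|m}$: the duality of \cite{OblomkovRozansky19a} is not a naive matrix transpose but a derived construction involving twists by the tautological bundle and Koszul resolutions, so tracing how it interacts with the extra exterior-algebra data $(\Lambda^\star\CC^n, d_{m|m})$ requires careful bookkeeping of signs, shifts, and the reparametrization of the stability locus. The condition $m=k$ is indispensable here: for $m\neq k$ the symbol $X^m Y^k$ is genuinely asymmetric in $X$ and $Y$, so the intertwiner simply cannot exist, which is consistent with the expectation that the conjecture is specific to the $\gl(m|m)$ case.
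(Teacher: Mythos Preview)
The paper does not prove this statement: it is presented as a \emph{conjecture}, with the sole remark that the authors ``expect that the method of \cite{OblomkovRozansky19a} yields a proof.'' There is therefore no proof in the paper to compare against.

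Your proposal is consonant with that expectation: you invoke the dualizing involution $\mathbb{D}$ of \cite{OblomkovRozansky19a}, transport it along the closure description of Section~\ref{sec:braid-closure}, and argue that the Koszul differential $d_{m|m}$ should commute with $\mathbb{D}$ because the symbol $X^mY^m$ is symmetric in $X$ and $Y$. This is exactly the mechanism one would anticipate, and you correctly isolate the condition $m=k$ as what makes the intertwining plausible.

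That said, what you have written is a plan, not a proof, and you say as much. The substantive content of the conjecture lies precisely in the step you flag as the ``main obstacle'': verifying that the derived duality of \cite{OblomkovRozansky19a} --- which, as you note, is not a naive transpose but involves twists, Koszul resolutions, and a reparametrization of the stable locus --- genuinely intertwines with the extra exterior-algebra factor $(\Lambda^\star\CC^n,d_{m|m})$. Until that bookkeeping is carried out, the argument remains at the level of a heuristic, which is also where the paper leaves it. In short: your approach matches the authors' stated expectation, but neither you nor the paper has closed the gap.
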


%\bibliography{/Users/oblomkov/Dropbox/bibtexfiles/expanded}
%\bibliography{/home/oblomkov/Dropbox/bibtexfiles/expanded}
% \bibliography{expanded-lev}
%\bibliography{/Users/oblomkov/Dropbox/bibtexfiles/expanded}

%\bibliographystyle{alpha}

\end{document}